\def\epsilon{\varepsilon}
\newtheorem{theorem}{Theorem}[section]
\newtheorem{lemma}[theorem]{Lemma}
\newtheorem{corollary}[theorem]{Corollary}
\newtheorem{proposition}[theorem]{Proposition}
\newtheorem{remark}[theorem]{Remark}
\newcommand{\beqa}{\begin{eqnarray*}}
\newcommand{\eeqa}{\end{eqnarray*}}
\newcommand{\field}[1]{\mathbb{#1}}
\newcommand{\bR}{\field{R}}        
\newcommand{\bN}{\field{N}}        
\newcommand{\bZ}{\field{Z}}        
\def\cS{\mathcal{S}}
\def\rd{\bR^d}
\def\rdd{{\bR^{2d}}}
\def\R{\right)}
\def\<{\left<}
\def\>{\right>}
\def\mv1{M_v^1}
\def\mn{(m,n)}
\def\mn'{(m',n')}
\def\R{\mathbb{R}}
\def\Ren{\mathbb{R}^d}
\def\Fur{\mathcal{F}}
\def\Sn2{S_{2}(L^{2}(\Ren))}
\def\S1{S_{1}(L^{2}(\Ren))}
\def\sig00{\sigma_{0,0}}
\begin{document}
\begin{abstract}
We present a wave packet analysis of a class of possibly degenerate parabolic equations with variable coefficients. As a consequence, we prove local wellposedness of the corresponding Cauchy problem in spaces of low regularity, namely the modulation spaces, assuming a nonlinearity of analytic type. As another application, we deduce that the corresponding phase space flow decreases the global wave front set. We also consider the action on spaces of analytic functions, provided the coefficients are analytic themselves.
\end{abstract}

\title[Phase space analysis of semilinear parabolic equations]{Phase space analysis of semilinear parabolic equations}

\author{Fabio Nicola}
\address{Dipartimento di Scienze Matematiche,
Politecnico di Torino, corso Duca degli Abruzzi 24, 10129 Torino,
Italy}
\email{fabio.nicola@polito.it}

\subjclass{}

\subjclass[2010]{35S05,35K58,46E35,46E20}
\keywords{Pseudodifferential operators, modulation spaces, parabolic equations, local wellposedness, Sj\"ostrand class, analytic functions}
\maketitle

\section{Introduction}
The study of the local and global wellposedness of nonlinear evolution PDEs in spaces of low regularity represents one the most active research fields, where the deepest machinery of modern Harmonic Analysis is applied \cite{tao}. In particular, a number of results were obtained for the Schr\"odinger, wave, Klein-Gordon and Navier-Stokes equations with initial data in the so-called modulation spaces \cite{bertinoro3,bertinoro2,cnjde,bertinoro12,cnr2,kki1,kki2,kki3,MNRTT,baoxiang0,bertinoro58,bertinoro58bis,bertinoro57}. These spaces were used as a fundamental tool in Time-frequency Analysis \cite{fei,book} but their role in PDEs has been recognized only recently. We refer to the recent survey \cite{ruz} and monograph \cite{baoxiang} for a detailed overview of results and techniques. Variable coefficient Schr\"odinger equations were considered in \cite{cgnr2,cn,cnr1,kki4,tataru}. \par 
Modulation spaces can be defined similarly to the Besov spaces, but for a different geometry: the dyadic annuli in the frequency domain are replaced by isometric boxes $\mathcal{Q}_k$, $k\in\bZ^d$, which allows a finer analysis in many respects. Namely, for $1\leq p,q\leq\infty$, $s\geq 0$, one defines
\begin{equation}\label{prima}
M^{p,q}_s=\Big\{f\in\cS'(\rd): \|f\|_{M^{p,q}_s}:=\Big(\sum_{k\in\bZ^d}\langle k\rangle^{sq}\|\square_k f\|_{L^p}^q\Big)^{1/q}<\infty\Big\}
\end{equation}
(with obvious changes if $q=\infty$), where $\square_k$ are Fourier multipliers with symbols $\chi_{\mathcal{Q}_k}$ conveniently smoothed; we also set $M^{p,q}$ for $M^{p,q}_0$. We recapture in particular the $L^2$-based Sobolev spaces $M^{2,2}_s=H^s$, whereas the space $M^{\infty,1}$ coincides with the so-called Sj\"ostrand's class \cite{lerner,sjostrand}. \par
Here we study the local wellposedness of the Cauchy problem in modulation spaces of a wide class of parabolic, possibly degenerate, semilinear equations in $\rd$.\par We consider the symbol classes $S^{(k)}_{0,0}$, $k\in\bN$ \cite{tataru}, defined by the estimates
\begin{equation}\label{s200}
|\partial^\alpha_{\xi}\partial^\beta_x a(x,\xi)|\leq C_{\alpha,\beta},\quad |\alpha|+|\beta|\geq k,\quad x,\xi\in\rd,
\end{equation}
endowed with the obvious seminorms. We also allow the symbols to be time-dependent.\par
Namely, let $T>0$ be fixed and consider the pseudodifferential operator (Weyl quantization)
\begin{equation}\label{equazione}
L=\partial_t+a^w(t,x,D)+ib^w(t,x,D),
\end{equation}
where the symbols $a(t,x,\xi)$ and $b(t,x,\xi)$ of the diffusion and drift terms are {\it real-valued} and satisfy the following conditions:
\begin{itemize}
\item[\bf (i)] $a(t,\cdot)$ belongs to a bounded subset of $S^{(2)}_{0,0}$ for $t\in[0,T]$;
\item[\bf (ii)]$a(t,x,\xi)\geq -C$ for some constant $C>0$ and every $t\in[0,T]$, $x,\xi\in\rd$;
\item[\bf (iii)] $b(t,\cdot)$ belongs to a bounded subset of $S^{(1)}_{0,0}$ for $t\in[0,T]$;
\item[\bf (iv)] the maps $t\mapsto a(t,\cdot)$, $t\mapsto b(t,\cdot)$ are (weakly) continuous from $[0,T]$ to $S'(\rd)$ (or equivalently pointwise).
\end{itemize}
Observe that we do not assume any ellipticity-type condition. As a very simple example, one may consider the operator (cf.\ \cite{33,187})
\[
\partial_t-\sum_{j=1}^d a_j(t)\partial^2_{x_j}+\sum_{j=1}^d\tilde{a}_j(t)\partial_{x_j}+V_1(t,x)+iV_2(t,x),
\]
where $a_j,\tilde{a}_j \in C([0,T])$ are real-valued, with $a_j(t)\geq0$, $j=1,\ldots,d$, and $V_1$ and $V_2$ are real-valued, continuous with respect to $t\in[0,T]$ for fixed $x$, with $V_1(t,x)\geq -C$ and satisfying for $t\in[0,T]$, $x\in\rd$,
\[
|\partial^\alpha_x V_1(t,x)|\leq C_\alpha,\ |\alpha|\geq 2; \quad |\partial^\alpha_x V_2(t,x)|\leq C_\alpha,\ |\alpha|\geq 1. 
\]  
We further consider a nonlinearity of the form 
\begin{align}\label{nonlinea} 
\textit{$\mathcal{N}(t,x,u)=g(t,x) F(u)$ with $g\in C([0,T]; M^{\infty,1}_s)$ for some $s\geq0$,}\\
\textit{where the function $F$ is entire real-analytic, with $F(0)=0$}\nonumber
\end{align}
($F(z)$ has a Taylor expansion in $z,\overline{z}$, valid in the whole complex plane). \par
In particular we can take a polynomial in $u,\overline{u}$. The case $\mathcal{N}(t,x,u)=g(t,x)u$, with $g\in C([0,T]; M^{\infty,1})$,  corresponds to a potential in the Sj\"ostrand's class. \par
We have therefore the following local wellposedness result.   
\begin{theorem}\label{mainteo0} 
Assume the above hypotheses {\bf (i)--(iv)} and \eqref{nonlinea}. Let $1\leq p<\infty$. There exists $T_0\leq T$ such that for every $u_0\in M^{p,1}_s$ there exists a unique solution $u\in C([0,T_0];M^{p,1}_s)$ to the Cauchy problem
\begin{equation}\label{equazione0} 
\begin{cases}
Lu=\mathcal{N}(t,x,u)\\
u(0)=u_0.
\end{cases}
\end{equation} 
Moreover, the map $M^{p,1}_s \ni u_0\mapsto u\in C([0,T_0];M^{p,1}_s)$ is Lipschitz continuous on every ball. \par
The same result holds for $p=\infty$ if one replaces $M^{\infty,1}_s$ with the closure $\tilde{M}^{\infty,1}_s$ of the Schwartz space in $M^{\infty,1}_s$.\par

\end{theorem}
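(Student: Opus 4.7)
The overall strategy is a Duhamel/contraction-mapping argument in a closed ball of $C([0,T_0];M^{p,1}_s)$. The success hinges on two ingredients: a boundedness estimate for the linear propagator associated with $L$, and the algebra/Moser-type properties of the modulation space $M^{p,1}_s$ that accommodate the analytic nonlinearity.

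\emph{Step 1 (linear estimate).} I would first invoke --- this being precisely the content of the wave-packet analysis carried out in the earlier sections of the paper --- the boundedness of the two-parameter evolution operator $S(t,\tau)$, $0\le \tau\le t\le T$, solving $Lv=0$ with $v(\tau)=v_0$, uniformly on $M^{p,1}_s$:
\begin{equation*}
\|S(t,\tau)v_0\|_{M^{p,1}_s}\le C\|v_0\|_{M^{p,1}_s},
\end{equation*}
with $C$ depending only on $T$ and on the seminorms of $a$ and $b$ appearing in \textbf{(i)--(iv)}. The phase-space idea behind this is that, after conjugation by a Gabor frame, $S(t,\tau)$ becomes almost diagonal, and the lower bound in hypothesis \textbf{(ii)} is exactly what prevents the diagonal from blowing up.

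\emph{Step 2 (nonlinear estimate).} The key facts on the nonlinear side are that $M^{p,1}_s$, for $s\ge 0$, is a Banach algebra under pointwise multiplication and is continuously embedded in $L^\infty$, together with the multiplier property $M^{\infty,1}_s\cdot M^{p,1}_s\hookrightarrow M^{p,1}_s$. Writing the entire $F$ as a convergent Taylor series in $(u,\overline{u})$ with $F(0)=0$, the algebra property yields
\begin{equation*}
\|F(u)\|_{M^{p,1}_s}\le \sum_{m+n\ge 1}|c_{m,n}|\,\|u\|_{M^{p,1}_s}^{m+n},
\end{equation*}
which is finite for every $u\in M^{p,1}_s$ because $F$ has infinite radius of convergence. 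An analogous estimate for $F(u_1)-F(u_2)$ gives Lipschitz continuity on bounded sets with a modulus $\omega(R)$ depending on an upper bound $R$ for $\|u_1\|_{M^{p,1}_s}+\|u_2\|_{M^{p,1}_s}$. Multiplying by $g(t,\cdot)\in M^{\infty,1}_s$ is absorbed by the multiplier property, so that $u\mapsto \mathcal{N}(t,\cdot,u)$ is Lipschitz on balls of $M^{p,1}_s$ uniformly in $t\in[0,T]$.

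\emph{Step 3 (contraction).} Combining Steps 1 and 2, the Duhamel map
\begin{equation*}
\Phi(u)(t)=S(t,0)u_0+\int_0^t S(t,\tau)\mathcal{N}(\tau,\cdot,u(\tau))\,d\tau
\end{equation*}
sends the ball $\{u\in C([0,T_0];M^{p,1}_s):\sup_{t}\|u(t)\|_{M^{p,1}_s}\le R\}$ into itself and is a contraction there, provided $R\ge 2C\|u_0\|_{M^{p,1}_s}$ and $T_0$ is small enough (depending only on $R$ and the fixed data). The Banach fixed point theorem then yields existence and uniqueness, while the contraction estimate itself gives the Lipschitz dependence of $u$ on $u_0$ on bounded subsets. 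For $p=\infty$ the same scheme applies, but one must replace $M^{\infty,1}_s$ by its separable subspace $\tilde{M}^{\infty,1}_s$ in order to have strong time-continuity of $t\mapsto S(t,0)u_0$ and a well-defined Bochner integral.

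\emph{Main obstacle.} The hard part is not this nonlinear wrapping, which is essentially formal once the ingredients are at hand; it is the linear $M^{p,1}_s$-boundedness of $S(t,\tau)$ under the degenerate parabolic hypotheses, with symbol $a\in S^{(2)}_{0,0}$ merely bounded below and no ellipticity assumed. That estimate is exactly what the wave-packet/Gabor analysis of the earlier sections is designed to produce, and Theorem \ref{mainteo0} is in substance its nonlinear corollary.
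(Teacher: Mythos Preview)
Your proposal is correct and follows essentially the same route as the paper: Duhamel formulation plus contraction in $C([0,T_0];M^{p,1}_s)$, with the linear input being the $S^0_{0,0}$-representation of $S(t,\tau)$ from Theorem~\ref{mainteo} (hence uniform boundedness on $M^{p,1}_s$) and the nonlinear input being the algebra/multiplier properties of $M^{p,1}_s$. The only point worth making explicit for $p<\infty$ as well is the \emph{strong} continuity of $(t,\tau)\mapsto S(t,\tau)$ on $M^{p,1}_s$, which the paper checks via density of $\cS(\rd)$ together with the uniform $S^0_{0,0}$-bounds; you allude to this only in the $p=\infty$ case.
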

The above result is mainly of interest for the space $\tilde{M}^{\infty,1}_0$, since it contains the spaces $M^{p,1}_s$ for every $p<\infty$, $s\geq 0$. For comparison with the Besov spaces, we recall the sharp embedding $B^{d}_{\infty,1}\subset M^{\infty,1}\subset B^0_{\infty,1}$ \cite{baoxiang}. In general, elements in $\tilde{M}^{\infty,1}_0$ do not possess any fractional derivative in $L^2_{loc}$. \par\par
This result is inspired by the detailed analysis in \cite{bertinoro57,baoxiang} for the Ginzburg-Landau operator $\partial_t-(a+i)\Delta$, $a\geq0$. Contrary to that situation, here we do not have wellposedness in the whole space $M^{\infty,1}$ and our result does not extend to second order symbols $b$ in \eqref{equazione}; see the counterexamples in Remarks \ref{contro1}, \ref{contro2} below. Moreover, for $a>0$ the Ginzburg-Landau semigroup displays a smoothing effect of infinite order once $t>0$, which forces the solution to be analytic for initial data in the above modulation spaces (cf.\ \cite{clx1,clx2,clx3,clx4,lerner2,morimoto,villani}). In the present situation, where degeneracy is allowed, we do not have this phenomenon and the above result seems therefore optimal.  Actually, we will prove in Section 6 that the linear propagator, although does not increase regularity, still preserves the space of analytic functions if the coefficients are analytic themselves. 
\par
The proof of Theorem \ref{mainteo0} relies on a representation of the linear propagator as a pseudodifferential operator with symbol in the class $S^0_{0,0}=S^{(0)}_{0,0}$. To be precise we have the following key result.  
\begin{theorem}\label{mainteo} 
Under the above assumptions, for every $u_0\in\cS(\rd)$ there exists a unique solution $u\in C^1([0,T];\cS(\rd))$ to $Lu=0$, $u(0)=u_0$, and the propagator $S(t): u_0\mapsto u(t)$ is a pseudodifferential operator whose Weyl symbol $p(t,x,\xi)$ lies in a bounded subset of $S^0_{0,0}$ for $t\in[0,T]$. \par
Moreover the symbol $p$ is continuous as a function of $t\in[0,T]$ valued in $\cS'(\rdd)$.
\end{theorem}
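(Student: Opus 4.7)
The plan is to proceed in three steps: solve the Cauchy problem in $\cS(\rd)$ by energy methods, identify $S(t)$ as a Weyl pseudodifferential operator via the Schwartz kernel theorem, and then bootstrap the resulting symbol from $\cS'$ to a uniform bound in $S^0_{0,0}$.

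First, since $a(t,\cdot)+C\geq 0$ by hypothesis \textbf{(ii)}, a G\aa rding-type inequality in the class $S^{(2)}_{0,0}$ (Fefferman--Phong applies, since only bounded derivatives of order $\geq 2$ are needed) gives $(a^w(t)u,u)\geq -C'\|u\|_{L^2}^2$. Because $a,b$ are real, $a^w(t),b^w(t)$ are self-adjoint, so $ib^w$ contributes nothing to the real part and
\[
\tfrac{d}{dt}\|u(t)\|_{L^2}^2=-2(a^w(t)u,u)\leq 2C'\|u(t)\|_{L^2}^2.
\]
A standard approximation (Hille--Yosida, or the parabolic regularization $L+\varepsilon\Delta$, $\varepsilon\downarrow 0$) yields existence and uniqueness in $L^2$. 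Conjugating $L$ by $\langle x\rangle^N\langle D\rangle^N$ produces commutator errors whose Weyl symbols involve derivatives of $a$ and $b$, hence lie in $S^{(0)}_{0,0}$ and can be absorbed in a weighted energy estimate; iteration on $N$ produces solutions in $C^1([0,T];\cS(\rd))$.

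Since $S(t)$ and its formal adjoint both map $\cS$ to $\cS$ (the adjoint equation is of the same type), the Schwartz kernel theorem endows $S(t)$ with a Weyl symbol $p(t,\cdot)\in\cS'(\rdd)$, with $p(0,\cdot)=1$. Differentiating $S(t)u_0=p^w(t,x,D)u_0$ in $t$ and using $LS(t)=0$ on a dense set yields, in $\cS'$,
\[
\partial_t p(t,\cdot)+(a(t,\cdot)+ib(t,\cdot))\,\sharp\,p(t,\cdot)=0,\qquad p(0,\cdot)=1,
\]
where $\sharp$ is the Weyl (Moyal) product. Continuity of $t\mapsto p(t)$ in $\cS'(\rdd)$ is then a direct consequence of this equation together with hypothesis \textbf{(iv)}.

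The crux is showing $p(t,\cdot)$ lies in a bounded subset of $S^0_{0,0}$ uniformly in $t\in[0,T]$. Differentiating the transport equation in the phase-space variable $z=(x,\xi)$ gives, for each multi-index $\gamma$,
\[
\partial_t(\partial^\gamma p)=-(a+ib)\,\sharp\,(\partial^\gamma p)+R_\gamma(t,\cdot),
\]
where $R_\gamma$ is a finite sum of terms $(\partial^{\gamma'}(a+ib))\,\sharp\,(\partial^{\gamma''}p)$ with $|\gamma''|<|\gamma|$, and where $\partial^{\gamma'}a\in S^{(\max(0,2-|\gamma'|))}_{0,0}$, $\partial^{\gamma'}b\in S^{(\max(0,1-|\gamma'|))}_{0,0}$. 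Working at the $L^2\to L^2$ operator-norm level (equivalent to finitely many $S^0_{0,0}$-seminorms by Calder\'on--Vaillancourt), the leading term is controlled by G\aa rding applied once more to $\mathrm{Re}(a+ib)=a\geq -C$, while $R_\gamma$ is estimated inductively on $|\gamma|$. A Gronwall argument then closes the hierarchy, yielding $\|p(t)\|_{S^0_{0,0},N}\leq C_N e^{\kappa_N T}$ for every $N$.

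\textbf{The principal obstacle} is this last step. Since $a$ may grow quadratically and $\partial a, b$ linearly in $(x,\xi)$, naive pointwise Gronwall estimates on $\partial^\gamma p$ are hopeless; one must work at the operator level, where the genuine dissipation $a^w\geq -C$ (rather than the mere pointwise sign of $a$) provides the needed control. The interplay between the Weyl calculus on $S^{(2)}_{0,0}\times S^0_{0,0}$, Calder\'on--Vaillancourt boundedness, and sharp G\aa rding/Fefferman--Phong is precisely what keeps the energy hierarchy finite.
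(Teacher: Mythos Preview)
Your route differs from the paper's. You aim for a Beals-type argument: differentiate the symbol transport equation $\partial_t p+(a+ib)\sharp p=0$ and close a Gronwall hierarchy at the $L^2\to L^2$ level. The paper instead invokes the Gr\"ochenig--Tataru almost-diagonalization criterion: an operator has Weyl symbol in $S^0_{0,0}$ iff $|\langle A\pi(z)g,\pi(w)g\rangle|\lesssim\langle w-z\rangle^{-N}$. Since $\pi(z)^\ast S(t)\pi(z)$ solves the same Cauchy problem with translated symbols $a(t,\cdot+z)$, $b(t,\cdot+z)$ --- which lie in \emph{bounded} subsets of $S^{(2)}_{0,0}$, $S^{(1)}_{0,0}$ uniformly in $z$ --- everything reduces to energy estimates in the Shubin scale $Q^{2k}$ with constants independent of $z$.

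Your argument, as written, has a genuine gap at the point you yourself flag. In the Leibniz expansion the terms with $|\gamma'|=1$ carry a factor $(\partial^{\gamma'}a)^w$ with $\partial^{\gamma'}a\in S^{(1)}_{0,0}$, hence of linear growth; this operator is \emph{not} $L^2$-bounded, so ``$R_\gamma$ is estimated inductively on $|\gamma|$'' fails precisely there. G\aa rding controls the diagonal term $\mathrm{Re}\langle\mathcal{L}P_\gamma u,P_\gamma u\rangle$, but the dangerous contributions are cross terms $\mathrm{Re}\langle(\mathrm{ad}^{e_j}a^w)P_{\gamma-e_j}u,P_\gamma u\rangle$; although $\mathrm{ad}^{e_j}a^w$ is skew-adjoint, skew-adjointness kills $\mathrm{Re}\langle Bw,w\rangle$, not cross terms between different $P_\gamma$'s. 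The same oversight already occurs in your Step~1: conjugating $a^w$ by $\langle x\rangle^N\langle D\rangle^N$ produces first-order commutators with symbol $\sim\partial a\in S^{(1)}_{0,0}$, \emph{not} $S^{(0)}_{0,0}$. The paper's Lemma~\ref{lemmachiave} resolves exactly this: by conjugating with the \emph{separate} weights $E_k(D)=(1-\Delta)^k$ and $E_k(x)=(1+|x|^2)^k$ (never their product), the first-derivative-of-$a$ correction to the symbol comes out purely imaginary, hence its Weyl quantization is skew-adjoint and drops from the real part of the energy. That structural cancellation is the crux, and your sketch supplies no substitute for it. (A minor side point: Calder\'on--Vaillancourt gives only the forward implication; the converse you need is Beals' theorem, which is an infinite hierarchy, not finitely many seminorms.)
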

This is a classical result if $a(t,x,\xi)$ is elliptic, of arbitrary order, in the framework of H\"ormander's symbol classes of type $\rho=1$, $\delta=0$ \cite{treves}. Similar results were obtained by several authors under assumptions of subellipticity \cite{cinesi}. Remarkably, the representation as a pseudodifferential operator still holds in the present generality - although in the exotic symbol class $S^0_{0,0}$. The lack of a full symbolic calculus prevents us to follow the classical proof. Instead, we will use a characterization of pseudodifferential operators contained in the seminal papers \cite{g-ibero,tataru}, which involves their (continuous) matrix representation in phase space, with respect to Gabor wave packets (see also \cite{GR} for further generalizations to other classes of rough symbols, and \cite{cgnr,cnr1} for similar results for Fourier integral operators). Basically we will see that the desired representation of the propagator is a consequence of the wellposedness in $\cS(\rd)$ of the linear Cauchy problem, provided a certain uniformity with respect to phase space shifts holds (in a sense that will be made clear below; see \eqref{pri}). This principle seems widely applicable (conveniently modified) to other situations where a global symbolic calculus is not available and the fundamental solution cannot therefore be constructed via asymptotic expansions. This occurs, for example, for classes of rough symbols, or even in the analytic category.  We plan to investigate these issues elsewhere.\par
Finally we present in Section \ref{applicazioni} another application of Theorem \ref{mainteo}. Namely, the linear propagator for operators of the form \eqref{equazione} generally does not preserve the classical wave front set; consider, e.g., the case $a\equiv0$, $b(t,x,\xi)=\xi$, in dimension $d=1$, for which $S(t)$ is a translation. Besides this trivial example, we refer to \cite{parenti} for sharp propagation results of the classical wave front set for operators as in \eqref{equazione}. On the other hand, it was proved in \cite{rw} that pseudodifferential operators with symbols in $S^0_{0,0}$ preserve a type of {\it global} wave front set $WF_G$ introduced by H\"ormander in \cite{hormander} and defined in terms of conic subsets of $\rdd$ (rather than $\R^d$). This variant is suitable in dealing with operators in the whole Euclidean space, since it measures both regularity and decay of solutions; for example, if $f\in\cS'(\rd)$ then $f\in\cS(\rd)$ if and only if $WF_G(f)=\emptyset$.  Therefore, we see that the linear propagator $S(t)$ decreases such a wave a front set, i.e.\ it is {\it globally pseudolocal}.\par\medskip
Briefly, the paper is organized as follows. Section 2 is devoted to basic notation whereas in Section 3 we show a preliminary lower bound. Sections 4 and 5 are devoted to the proofs of Theorems \ref{mainteo} and \ref{mainteo0} respectively. In Section 6 we study the action of the linear propagator on the space of analytic functions, whereas in Section 7 we prove its global pseudolocality. 
\section{Notation}
For $z=(x,\xi)\in\rd\times\rd$ we define the phase space shifts $\pi(z)f=M_\xi T_x f$, where $T_x f(y)=f(y-x)$ and $M_\xi f(y)=e^{i\xi y} f(y)$ are the translation and modulation operators. Notice that $\pi(z)$ is a unitary operator. We will denote by $\langle\cdot,\cdot\rangle$ the inner product in $L^2(\rd)$, or the pairing between $\cS'(\rd)$ and $\cS(\rd)$ (anti-linear on the second factor), and by $||\cdot||$ the $L^2$-norm. The Fourier transform is normalized as $$\Fur(f)(\xi)=\widehat{f}(\xi)=\int e^{-ix\xi} f(x)\,dx,$$ and the Weyl quantization of a symbol $a(x,\xi)$ is correspondingly defined as 
\[
a^w(x,D)f=(2\pi)^{-d}\iint e^{i(x-y)\xi}a\big(\frac{x+y}{2},\xi\big)f(y)\,dy\,d\xi.
\]
We recall that real-valued symbols give rise to formally self-adjoint operators.\par
As usual we denote by $S^0_{0,0}$ the space of smooth functions in $\rdd$ which are bounded together with their derivatives of every order. The symbol classes $S^{(k)}_{0,0}$, $k\in\bN$, were already introduced above in \eqref{s200}.\par
Given $f\in\cS'(\rd)$, $g\in\cS(\rd)$, we define the short-time Fourier transform of $f$ with window $g$ as 
\[
V_g f(z)=\langle f,\pi(z)g\rangle,\quad z=(x,\xi)\in\rd\times\rd.
\]
We anticipated in the introduction (cf.\ \eqref{prima}) the definition of the modulation spaces $M^{p,q}_s$ in terms of a uniform decomposition of the frequency domain. In the sequel we will use some boundedness results from papers  where modulation spaces are defined in terms of the short-time Fourier transform. For the sake of completeness we recall that in fact we have
\[
M^{p,q}_s=\Big\{f\in\cS'(\rd): \int\Big(\int \langle \xi\rangle^{ps}|V_g f(x,\xi)|^p\,dx\Big)^{q/p}\,d\xi\Big)^{1/q}<\infty\Big\},
\]
where $g$ is any non-zero Schwartz function (with obvious changes if $p=\infty$ or $q=\infty$). The equivalence of the two definitions is shown e.g.\ in \cite{bertinoro58bis}.\par
We refer to \cite[Chapter 11]{book} and \cite[Chapter 6]{baoxiang} for properties and applications of modulation spaces to Time-frequency Analysis and PDEs respectively.
 \par
 \section{A preliminary lower bound}
We will need the following lower bound. It is basically a consequence of the sharp G\a r arding inequality but it is a bit subtler for the required uniformity of the constants. 
\begin{lemma}\label{lemmachiave}
Let $a(x,\xi)$ and $b(x,\xi)$ be real-valued symbols in $S^{(2)}_{0,0}$ and $S^{(1)}_{0,0}$ respectively, with $a(t,x,\xi)\geq -C_0$ for some constant $C_0>0$ and every $x,\xi\in\rd$. \par Let 
$$\mathcal{L}=a^w(x,D)+i b^w(x,D)$$
 and, for $k\in\bZ$, $E_k(D)=(1-\Delta)^k$, $E_k(x)=(1+|x|^2)^k$. Then for every $k\in\bZ$ there exists a constant $C>0$ depending only on $k$, the above constant $C_0$ and some seminorm of $a$ and $b$ in $S^{(2)}_{0,0}$ and $S^{(1)}_{0,0}$ respectively (hence on upper bounds for $|\partial^\alpha_x\partial^\beta_\xi a|$, $|\alpha|+|\beta|\geq 2$ and $|\partial^\alpha_x\partial^\beta_\xi b|$, $|\alpha|+|\beta|\geq 1$) such that, for every $u\in \cS(\rd)$, 
\begin{equation}\label{en1}
{\rm Re} \langle E_k(D) \mathcal{L}u,E_k(D) u\rangle \geq -C\|E_k(D)u\|^2
\end{equation}
\begin{equation}\label{en2}
{\rm Re} \langle E_k(x) \mathcal{L}u,E_k(x) u\rangle \geq -C\|E_k(x)u\|^2.
\end{equation}
\end{lemma}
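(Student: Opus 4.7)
The plan is to reduce both \eqref{en1} and \eqref{en2} to the base case $k=0$ by conjugation with $E_k$ (either $E_k(D)$ or $E_k(x)$), and to handle the base case via sharp G\aa rding. For $k=0$, since $b$ is real-valued, $b^w(x,D)$ is formally self-adjoint and $ib^w(x,D)$ is anti-self-adjoint, contributing zero to the real part; so it suffices to prove $\langle a^w(x,D)u,u\rangle\geq -C\|u\|^2$. This follows from the sharp G\aa rding inequality applied to $a+C_0\geq 0$, which lies in $S^{(2)}_{0,0}$ --- a classical result valid for symbols with bounded Hessian, with $C$ depending only on $C_0$ and finitely many $S^{(2)}_{0,0}$-seminorms of $a$.

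For general $k\in\bZ$, I would set $v:=E_k u\in\cS(\rd)$ and, using $E_k^*=E_k$ and $E_k^{-1}=E_{-k}$, split the inner product as
\[
\langle E_k\mathcal{L}u,E_ku\rangle=\langle\mathcal{L}v,v\rangle+\langle Rv,v\rangle,\qquad R:=[E_k,\mathcal{L}]E_{-k}.
\]
The first summand is controlled by the base case: $\mathrm{Re}\langle\mathcal{L}v,v\rangle\geq -C\|v\|^2=-C\|E_ku\|^2$. The whole estimate then reduces to the $L^2$-boundedness of the self-adjoint operator $R+R^*$, since $|\mathrm{Re}\langle Rv,v\rangle|=\tfrac{1}{2}|\langle(R+R^*)v,v\rangle|\leq C'\|E_ku\|^2$. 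A direct manipulation yields
\[
R+R^*=\bigl([E_k,a^w]E_{-k}+[E_{-k},a^w]E_k\bigr)+i\bigl(E_kb^wE_{-k}-E_{-k}b^wE_k\bigr).
\]

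The main obstacle is to verify that each piece of $R+R^*$ has a Weyl symbol in $S^0_{0,0}$; the desired $L^2$-bound then follows from Calder\'on--Vaillancourt. The crucial observation for the $a$-piece is that the leading Poisson-bracket contributions cancel exactly: when $E_k=E_k(D)$,
\[
\{E_k,a\}E_{-k}+\{E_{-k},a\}E_k=\nabla_x a\cdot\nabla_\xi(E_kE_{-k})=0,
\]
and analogously for $E_k=E_k(x)$ after swapping $x\leftrightarrow\xi$. Hence the symbol of the $a$-piece starts at second order in the Moyal expansion, producing bounded terms of the form $\xi_j\xi_l\langle\xi\rangle^{-4}\,\partial^2_{x_jx_l}a$ (or the $x\leftrightarrow\xi$ analogue), bounded since $a\in S^{(2)}_{0,0}$. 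For the $b$-piece I would use the identity $E_kb^wE_{-k}-E_{-k}b^wE_k=E_{-k}[E_{2k},b^w]E_{-k}$, reducing matters to $[E_{2k},b^w]$: its leading symbol $i\{E_{2k},b\}$ has growth $\langle\xi\rangle^{4k-1}$ (respectively $\langle x\rangle^{4k-1}$), which is exactly offset by the two outer $E_{-k}$ factors, while the requisite boundedness of $\partial_x b$ (respectively $\partial_\xi b$) comes from $b\in S^{(1)}_{0,0}$. Higher-order Moyal corrections involve more derivatives of $a,b$ and of $E_{\pm k}$, yielding additional decay; the only seminorms ever invoked are those ensured by the hypotheses, and $C$ depends solely on them, $C_0$, and $k$.
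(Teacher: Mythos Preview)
Your proposal is correct and follows essentially the same route as the paper. The paper conjugates directly, computing the Weyl symbol of $E_k(D)a^w(x,D)E_{-k}(D)$ as $a$ plus a purely imaginary first-order term $-2ki\sum_j\xi_j(1+|\xi|^2)^{-1}\partial_{x_j}a$ plus an $S^0_{0,0}$ remainder, and then discards the imaginary term as skew-adjoint before invoking sharp G\aa rding; your splitting $\langle E_k\mathcal L u,E_ku\rangle=\langle\mathcal L v,v\rangle+\langle Rv,v\rangle$ and the cancellation $\{E_k,a\}E_{-k}+\{E_{-k},a\}E_k=0$ is just the same observation phrased as ``the self-adjoint part of the commutator term has symbol in $S^0_{0,0}$''. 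The only cosmetic differences are that the paper reduces \eqref{en2} to \eqref{en1} at the outset via Fourier conjugation (symplectic covariance of Weyl quantization) rather than running the $x\leftrightarrow\xi$ argument in parallel, and that for the $b$-piece the paper simply notes that $E_kb^wE_{-k}$ has symbol $b+S^0_{0,0}$ (since already the \emph{first} derivatives of $b$ are bounded), which is a touch quicker than your detour through $E_{-k}[E_{2k},b^w]E_{-k}$.
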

\begin{proof}
We will prove only \eqref{en1}, because \eqref{en2} follows quickly by Plancherel's formula if one applies \eqref{en1} to the operator with Weyl symbol $a(\xi,-x)+ib(\xi,-x)$ and conjugates with the Fourier transform (cf.\ \cite[Theorem 18.5.9]{hormanderIII}).  \par
Now, we have 
\[
{\rm Re} \langle E_k(D) \mathcal{L} u,E_k(D)  u\rangle={\rm Re} \langle E_k(D) \mathcal{L}E_{-k}(D)  E_k(D)  u,E_k(D)  u\rangle,
\]
so that it suffices to prove that the operators $$A:= E_k(D)  a^w(x,D) E_{-k}(D),\quad B:= E_k(D)  b^w(x,D)E_{-k}(D) $$ satisfy the estimates
\begin{equation}\label{en5}
{\rm Re}\langle A u,u\rangle\geq-C\|u\|^2,\quad |{\rm Re}\langle iB u,u\rangle|\leq C\|u\|^2
\end{equation}
for some constant $C>0$ as in the statement. \par Let us prove the estimate for $A$. By the  pseudodifferential calculus \cite[Chapter XVIII]{hormanderIII}, $a^w(x,D)E_{-k}(D) $ has Weyl symbol 
\begin{equation}\label{ag0}
a(x,\xi)(1+|\xi|^2)^{-k}-ki\sum_{j=1}^d\xi_j (1+|\xi|^2)^{-k-1} \partial_{x_j}a(x,\xi)+r(x,\xi),
\end{equation}
where the symbol $(1+|\xi|^2)^{k+2} r(x,\xi)$  belongs to $S^0_{0,0}$, with seminorms dominated by those of $a$ in $S^{(2)}_{0,0}$ (only the derivatives of order $\geq 2$ of $a(x,\xi)$ are involved in the expression of the remainder $r$). Observe that the sum in \eqref{ag0} gives rise to a $0$-order symbol too, but it cannot be neglected, because it involves first derivatives of $a(x,\xi)$. Similarly, by the symbolic calculus one sees that the Weyl symbol of $A$ is given by 
\begin{equation}\label{en4}
a(x,\xi)-2ki\sum_{j=1}^d\xi_j (1+|\xi|^2)^{-1} \partial_{x_j}a(x,\xi)+r'(x,\xi),
\end{equation}
with $r'(x,\xi)\in S^0_{0,0}$, again with seminorms depending only on those of $a$ in $S^{(2)}_{0,0}$; the key fact is that, except for the terms in the above sum, the derivatives of $a(x,\xi)$ which arise in the symbolic calculus have order $\geq2$ (in turn this comes from the fact that the symbol of $(1-\Delta)^k$ is independent of $x$). Now, the second term in \eqref{en4} gives rise to a skew-adjoint operator in $L^2$, so that the first formula in \eqref{en5} follows from the sharp G\a r arding inequality (see e.g.\ \cite[Theorem 2.5.15]{lerner} for a version where the lower bound constant is shown to depend only on the seminorms in $S^0_{0,0}$ of the second derivatives of the symbol). \par The second formula in \eqref{en5} is shown by a similar and easier argument.
\end{proof}

\section{The linear propagator: proof of Theorem \ref{mainteo}}
First of all we show that the weak continuity of the symbol $p$ as a function of $t$ follows from the first part of the statement. In fact, we have equivalently to prove that the Schwartz kernel of $S(t)$, or equivalently that of the adjoint $S(t)^\ast$ depends continuously on $t$ for the (weak) topology of $\cS'(\rdd)$. Now, since $S(t)^\ast$ acts from $\cS'(\rd)$ into itself, its kernel $K_t(x,y)$ is given by $S(t)^\ast [\delta(\cdot-y)]$; namely, if $\varphi(x,y)$ is a function in $\cS(\rdd)$,
\[
\langle K_t, \varphi\rangle=\langle S(t)^\ast [\delta(x-y)],\varphi(x,y)\rangle=\langle \delta(x-y),S(t)\varphi(x,y)\rangle 
\]
(where $S(t)$ and its adjoint act on the variable $x$). On the other hand, this last expression is continuous as a function of $t$, because $S(t)$ is strongly continuous on $\cS(\rd)$. \par
We now come to the first part of the statement. Once we get existence and uniqueness of the solution, the desired representation of the propagator $S(t)$ is obtained by the following characterization of pseudodifferental operators with symbols in $S^0_{0,0}$ via their phase space kernel (\cite[Theorem 3.2]{g-ibero}, \cite[Theorem 1]{tataru}): 
\begin{theorem}\label{teocar}
Let $A$ be a linear continuous operator $\cS(\rd)\to\cS'(\rd)$. Then $A$ is a pseudodifferential operator with Weyl symbol in $S^0_{0,0}$ if and only if for every $N\in\bN$ and for some (and therefore every) Schwartz function $g\not\equiv0$, there exists a constant $C_N>0$ such that   
\[
|\langle A\pi(z)g,\pi(w)g\rangle|\leq  C_N(1+|w-z|)^{-N}\quad\forall z,w\in\rdd.
\]
In fact, each seminorm of the symbol in $S^0_{0,0}$ is estimated by the above constant $C_N$ for some $N>0$.
\end{theorem}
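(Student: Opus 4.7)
The plan is to prove Theorem \ref{teocar} by reducing the matrix-coefficient estimate to a decay estimate for the short-time Fourier transform of the Weyl symbol, and then invoking a well-known STFT characterization of $S^0_{0,0}$. The central tool is the identity
\begin{equation}\label{plan-id}
\langle a^w(x,D)\pi(z)g,\pi(w)g\rangle = e^{i\Phi(z,w)} V_{\cW g}\,a\!\left(\tfrac{z+w}{2},\, \cJ(w-z)\right),
\end{equation}
where $\cW g=W(g,g)$ is the Wigner transform of $g$ (a Schwartz function on $\rdd$), $\cJ$ denotes the standard symplectic rotation on $\rdd$, and $\Phi(z,w)$ is a real quadratic phase. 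Identity \eqref{plan-id} (a classical observation in time-frequency analysis) follows from writing $\langle a^w \pi(z)g,\pi(w)g\rangle = \langle a, W(\pi(w)g,\pi(z)g)\rangle$ and using the covariance property of the cross-Wigner transform, which produces a phase-space shift of $\cW g$ by $(z+w)/2$ together with a modulation factor of frequency $\cJ(w-z)$.

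Next I would record the STFT characterization of $S^0_{0,0}$: for a fixed nonzero Schwartz window $G$ on $\rdd$,
\begin{equation}\label{plan-char}
a\in S^0_{0,0}\ \Longleftrightarrow\ \forall N\in\bN,\ \exists C_N>0:\ \sup_{u\in\rdd}|V_G a(u,v)|\le C_N\langle v\rangle^{-N}\quad\forall v\in\rdd,
\end{equation}
with each seminorm in $S^0_{0,0}$ controlled by finitely many of the constants $C_N$ (and vice versa). This is proved by Taylor-expanding $a$ around $u$ inside the defining integral of $V_G a$, integrating by parts against the oscillating factor $e^{-iv\cdot(\cdot)}$, and conversely reconstructing $a$ via the resolution-of-identity formula $a(x,\xi)=\|G\|^{-2}\int V_G a(u,v)\,\pi(u,v)G(x,\xi)\,du\,dv$ and differentiating under the integral. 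The window-independence is built in: \eqref{plan-char} for one Schwartz window implies it for every other via the convolution relation $V_{G_2}a = (V_{G_1} a)\ast_{\text{tw}} V_{G_2}G_1$, where $V_{G_2}G_1\in\cS(\R^{4d})$.

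Combining \eqref{plan-id} with \eqref{plan-char} applied to $G=\cW g$ yields the theorem: the change of variables $(z,w)\mapsto((z+w)/2,\cJ(w-z))$ is a linear bijection of $\rdd\times\rdd$ under which $|w-z|$ is comparable to $|\cJ(w-z)|$, so the hypothesized bound $|\langle A\pi(z)g,\pi(w)g\rangle|\le C_N(1+|w-z|)^{-N}$ is equivalent to $\sup_u|V_{\cW g}a(u,v)|\le C'_N\langle v\rangle^{-N}$, which by \eqref{plan-char} is equivalent to $a\in S^0_{0,0}$ with quantitative control of seminorms. Independence of the Schwartz window $g\neq 0$ in the matrix-coefficient estimate then follows from the window-independence of \eqref{plan-char}, or equivalently from the fact that the orbit $\{\pi(z)g\}$ is a continuous Gabor frame and a change of window is implemented by a rapidly decaying convolution kernel on phase space.

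The main obstacle is the careful verification of identity \eqref{plan-id}, including the precise linear symplectic change of variables and the harmless quadratic phase; once this is in hand, the equivalence is essentially a bookkeeping between the STFT of the symbol and the matrix coefficients of the operator. A secondary subtlety, needed for the last sentence of the statement, is to track the dependence of constants in \eqref{plan-char} on $C_N$ quantitatively, which is standard but must be done uniformly in order to claim that each $S^0_{0,0}$-seminorm is estimated by a finite number of the bounds $C_N$.
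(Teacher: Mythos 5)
Your proof is correct, but there is nothing in the paper to compare it against line by line: the paper does not prove Theorem \ref{teocar} at all; it quotes it from \cite[Theorem 3.2]{g-ibero} and \cite[Theorem 1]{tataru} and uses it as a black box in the proof of Theorem \ref{mainteo}. What you have written is essentially a reconstruction of Gr\"ochenig's argument in the first of those references: your central identity $\langle a^w\pi(z)g,\pi(w)g\rangle=e^{i\Phi(z,w)}\,V_{W(g,g)}a\bigl(\tfrac{z+w}{2},\cJ(w-z)\bigr)$ is his Lemma 3.1, and your short-time Fourier transform characterization of $S^0_{0,0}$ (the condition $\sup_u|V_Ga(u,v)|\le C_N\langle v\rangle^{-N}$ for every $N$) is the identification of $S^0_{0,0}$ with the intersection over $N$ of the weighted modulation spaces $M^{\infty,\infty}_{1\otimes v_N}$, which is precisely how the polynomial-decay (almost diagonalization) statement is derived there and in \cite{GR}; the change-of-window convolution estimate is also the standard one.

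Two small points you should make explicit. First, the hypothesis is only that $A:\cS(\rd)\to\cS'(\rd)$ is linear and continuous, so before you may speak of $V_{W(g,g)}a$ you must invoke the Schwartz kernel theorem to produce a Weyl symbol $a\in\cS'(\rdd)$; your identity then holds as the pairing of the tempered distribution $a$ with the Schwartz function $W(\pi(w)g,\pi(z)g)$. Second, in the converse direction of your STFT characterization, the inversion integral converges absolutely at each fixed point of $\rdd$ only because of the spatial decay of $G$ and its derivatives (the uniform bound $\sup_u|V_Ga(u,v)|\le C_N\langle v\rangle^{-N}$ alone gives no integrability in $u$); this is what justifies differentiating under the integral sign, and it is also where the quantitative control of each $S^0_{0,0}$-seminorm by finitely many of the constants $C_N$, required by the last sentence of the statement, comes out.
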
 
Hence we have to verify that for any given non-zero Schwartz window $g$, it turns out
\begin{equation}\label{ve1}
|\langle S(t)\pi(z)g,\pi(w)g\rangle|\leq  C_N(1+|w-z|)^{-N}\quad\forall z,w\in\rdd,\ t\in[0,T],
\end{equation}
for every $N\in\bN$. 
This is equivalent to 
\begin{equation}\label{ve2}
|\langle \pi(z)^\ast S(t)\pi(z)g,\pi(w-z)g\rangle|\leq  C_N(1+|w-z|)^{-N} \quad\forall z,w\in\rdd,\ t\in[0,T].
\end{equation}
Now we will make use of the following remark.
\begin{lemma}
Let $f,g$ be Schwartz functions. Then for every $N\in\bN$ there exist $k\in\bN$ and $C_N>0$ such that
\begin{equation}\label{ve3}
|\langle f,\pi(w)g\rangle|\leq  C_N|f|_k|g|_k(1+|w|)^{-N}
\end{equation}
where we set $|f|_k=\sup\limits_{|\alpha|+|\beta|\leq k}\sup\limits_{x\in\rd}|x^\beta\partial^\alpha f(x)|$. 
\end{lemma}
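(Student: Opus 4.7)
The plan is to write the pairing $\langle f,\pi(w)g\rangle$ explicitly as a short-time Fourier transform and extract decay in each of the two variables $(x,\xi)=w$ separately, by the standard integration-by-parts / weight-splitting technique.

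First, I would unwind the definitions: for $w=(x,\xi)\in\rdd$,
\[
\langle f,\pi(w)g\rangle=\int_{\rd} f(y)\,\overline{g(y-x)}\,e^{-i\xi y}\,dy.
\]
To produce decay in $\xi$, I would integrate by parts against the oscillatory factor $e^{-i\xi y}$: for every multi-index $\alpha$,
\[
\xi^{\alpha}\langle f,\pi(w)g\rangle=(-i)^{|\alpha|}\int_{\rd} \partial_y^{\alpha}\bigl[f(y)\,\overline{g(y-x)}\bigr]\,e^{-i\xi y}\,dy,
\]
which, via Leibniz, involves derivatives of $f$ and $g$ of order at most $|\alpha|$. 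This step exchanges one power of $|\xi|$ against one derivative of $f$ or $g$, paid for by the seminorms $|f|_{|\alpha|}$, $|g|_{|\alpha|}$.

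To produce decay in $x$, I would use the elementary inequality
\[
(1+|x|)^{2M}\leq C_M\,(1+|y|)^{2M}(1+|y-x|)^{2M},\qquad y,x\in\rd,
\]
which redistributes the weight in $x$ onto $y$ (absorbed by polynomial decay of $f$ through $|f|_k$) and onto $y-x$ (absorbed by polynomial decay of $g$ through $|g|_k$). After pulling out the $\xi$-weights as above, the remaining integrand is of the form $p_1(y)\partial^{\alpha_1}f(y)\cdot p_2(y-x)\overline{\partial^{\alpha_2}g(y-x)}$ with polynomial weights $p_1,p_2$; majorizing by Schwartz seminorms and dividing by $(1+|y|)^{d+1}(1+|y-x|)^{d+1}$ yields an absolutely convergent integral.

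Combining the two estimates, I would pick, for a given $N$, an integer $k$ large enough to accommodate simultaneously the $\xi$-derivatives up to order $N$ and the weights of order $N$ in $x$ on both $f$ and $g$, plus an extra $d+1$ to ensure integrability. Then
\[
(1+|x|+|\xi|)^{N}\,|\langle f,\pi(w)g\rangle|\leq C_N\,|f|_k\,|g|_k,
\]
which is the claimed bound (after replacing $1+|x|+|\xi|$ by the equivalent $1+|w|$). The main obstacle is purely bookkeeping: choosing $k=k(N)$ and the constant $C_N$ uniformly so that the same Schwartz seminorm controls both the derivative count coming from the $\xi$-integration by parts and the polynomial weight count coming from the $x$-decay; no analytical difficulty arises.
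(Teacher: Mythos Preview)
Your argument is correct. The paper's proof is a one-line citation: it identifies $\langle f,\pi(w)g\rangle$ as the short-time Fourier transform $V_g f(w)$ and invokes the known continuity of the bilinear map $(g,f)\mapsto V_g f$ from $\cS(\rd)\times\cS(\rd)$ to $\cS(\rdd)$ (referring to \cite[Theorem~11.2.5]{book}). What you have written is precisely the standard proof of that continuity statement---integration by parts to gain $\xi$-decay, Peetre's inequality to gain $x$-decay, and a spare weight for integrability---so the two approaches coincide in substance; yours is self-contained while the paper's defers to the literature.
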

\begin{proof}
We recognize in the left-hand side of \eqref{ve3} the short-time Fourier transform $V_g f(w)=\langle f,\pi(w)g\rangle$ of the function $f$ with window $g$. Now, it is well-known that the map $(g,f)\mapsto V_g f$ is continuous from $\cS(\rd)\times\cS(\rd)$ to $\cS(\rdd)$ (see e.g.\ \cite[Theorem 11.2.5]{book}). 
\end{proof}

In view of this lemma, \eqref{ve2} will follow if we prove that\par\medskip {\it The function $\pi(z)^\ast S(t)\pi(z)g$ is Schwartz, with seminorms uniformly bounded with respect to $z\in\rdd$ and $t\in[0,T]$}.\medskip \\
Now, let $\mathcal{L}=a^w(t,x,D)+ib^w(t,x,D)$. Then $u=\pi(z)^\ast S(t)\pi(z)g$ satisfies
\begin{equation}\label{ve4}
\begin{cases}\partial_t u+\mathcal{L}_z u=0\\
u(0)=g,
\end{cases}
\end{equation}
where  
$$\mathcal{L}_z:=\pi(z)^\ast \mathcal{L}\pi(z)=a_z^w(t,x,D)+ib_z^w(t,x,D)$$
with
\[
a_z(t,x,\xi)=a(t,x+x_0,\xi+\xi_0),\quad b_z(t,x,\xi)=b(t,x+x_0,x+\xi_0) ,\ {\rm for}\ z=(x_0,\xi_0);
\]
(cf.\ the symplectic invariance of the Weyl quantization in \cite[Theorem 18.5.9]{hormanderIII}). Observe that $a_z$ and $b_z$ belong to bounded subsets of $S^{(2)}_{0,0}$ and $S^{(1)}_{0,0}$  respectively, for $z\in\rdd$. \par Hence we are left to prove the following result.
\begin{align}\label{pri} 
\textit{For every fixed $g\in\cS(\rd)$ the Cauchy problem \eqref{ve4} has a unique solution}\\
\textit{$u\in C^1([0,T];\cS(\rd))$, with seminorms uniformly bounded with respect to $z\in\rdd$.}\nonumber\end{align}
\noindent In particular, for $z=0$ we get the wellposedness in $\cS(\rd)$ of the original problem \eqref{equazione}.\par To this end we need some energy estimates. We define the weighted Sobolev spaces $Q^{2k}$, $k\in\bZ$ (the case of even integer exponents will suffice), as follows:
\begin{equation}\label{shubin}
Q^{2k}:=\big\{f\in L^2(\rd):\ \|f\|^2_{Q^{2k}}:=\|f\|_{H^{2k}}^2+\|\widehat{f}\|^2_{H^{2k}}<\infty\big\},
\end{equation}
where $H^s$ denotes the usual Sobolev space in $\rd$. Observe that $\cS(\rd)=\cap_{k\in\bZ} Q^{2k}$ topologically (and $\cS'(\rd)=\cup_{k\in\bZ} Q^{2k}$); see e.g.\ \cite{nr}. Hence  it remains to prove the following result.  
\begin{proposition}\label{proposizione} Let 
$$L_z=\partial_t+\mathcal{L}_z=\partial_t+a_z^w(t,x,D)+ib_z^w(t,x,D).$$
 For every $k\in\bZ$ there exists a constant $C$, depending only on $k$ and on some seminorm of $a$ and $b$ in $S^{(2)}_{0,0}$ and $S^{(1)}_{0,0}$ respectively, such that for every function $u\in C^1([0,T]; Q^{2k}(\rd))\cap C^0([0,T];Q^{2k+2})$ we have 
\begin{equation}\label{apriori}
\|u(t)\|_{Q^{2k}}\leq C \|u(0)\|_{Q^{2k}}+C\int_0^t \|L_zu(s)\|_{Q^{2k}}\, ds,\quad \forall t\in[0,T].
\end{equation}
As a consequence, for every initial datum $g\in Q^{2k}$ there exists a unique solution $u\in C^0([0,T]; Q^{2k})\cap C^1([0,T];Q^{2k-2})$ to \eqref{ve4}, with norms uniformly bounded with respect to $z$. 
\end{proposition}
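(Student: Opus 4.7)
The backbone of the proof is the a priori estimate \eqref{apriori}; once it is established, uniqueness is immediate and existence follows by a standard regularization argument. I will obtain \eqref{apriori} by the energy method: differentiate
$$\|u(t)\|_{Q^{2k}}^2=\|E_k(D)u(t)\|^2+\|E_k(x)u(t)\|^2$$
in time, substitute $\partial_t u = L_zu-\mathcal{L}_z u$, and collect terms as
\begin{align*}
\frac{d}{dt}\|u(t)\|_{Q^{2k}}^2 &= -2\mathrm{Re}\langle E_k(D)\mathcal{L}_z u,E_k(D)u\rangle - 2\mathrm{Re}\langle E_k(x)\mathcal{L}_z u,E_k(x)u\rangle\\
&\quad + 2\mathrm{Re}\langle E_k(D)L_z u,E_k(D)u\rangle + 2\mathrm{Re}\langle E_k(x)L_z u,E_k(x)u\rangle.
\end{align*}
The two terms on the first line are exactly the quantities estimated from below by Lemma \ref{lemmachiave} applied to $\mathcal{L}_z$. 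The crucial point is that $a_z(t,\cdot),b_z(t,\cdot)$ are phase-space translates of $a(t,\cdot),b(t,\cdot)$, hence lie in bounded subsets of $S^{(2)}_{0,0}$ and $S^{(1)}_{0,0}$ with the \emph{same} seminorms, and $a_z\geq -C$ with the same constant; consequently the lemma yields a constant $C$ independent of $z\in\rdd$ and $t\in[0,T]$. The remaining two terms are controlled by $2\|L_zu\|_{Q^{2k}}\|u\|_{Q^{2k}}$ via Cauchy--Schwarz. Putting everything together,
$$\frac{d}{dt}\|u(t)\|_{Q^{2k}}^2\leq 2C\|u(t)\|_{Q^{2k}}^2+2\|L_zu(t)\|_{Q^{2k}}\|u(t)\|_{Q^{2k}},$$
and Gronwall's inequality delivers \eqref{apriori}.

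Uniqueness then follows immediately by applying \eqref{apriori} to the difference of two solutions with the same initial datum. For existence I would use an elliptic regularization: set $L_z^{\varepsilon}=L_z+\varepsilon H$ with $H=-\Delta+|x|^2$ and $0<\varepsilon\leq 1$. The perturbed symbol $a_z+\varepsilon(|x|^2+|\xi|^2)+ib_z$ still satisfies the hypotheses of Lemma \ref{lemmachiave} with $\varepsilon$-uniform constants, since $|x|^2+|\xi|^2$ has bounded second derivatives and is nonnegative. For each fixed $\varepsilon>0$ the regularized problem is strongly parabolic in the Shubin calculus and admits a solution $u_\varepsilon\in C([0,T];Q^{2k})\cap C^1([0,T];Q^{2k-2})$, for instance by Galerkin approximation against the Hermite basis, which diagonalises $H$ and makes the finite-dimensional projections compatible with the $Q^{2k}$-scale. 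The $\varepsilon$-uniform version of \eqref{apriori} yields $\{u_\varepsilon\}$ bounded in $C([0,T];Q^{2k})$ and, via the equation, $\{\partial_tu_\varepsilon\}$ bounded in $C([0,T];Q^{2k-2})$; a weak-$\ast$ plus Arzel\`a--Ascoli extraction then produces a limit $u$ satisfying $L_z u=0$, $u(0)=g$.

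The main obstacle I foresee is upgrading this weak convergence to genuine continuity $u\in C([0,T];Q^{2k})$, since the equation is not reversible in time (the lower bound in Lemma \ref{lemmachiave} is one-sided, so no energy identity is available). I would handle this by approximation at the level of initial data: pick $g_n\in\cS(\rd)$ with $g_n\to g$ in $Q^{2k}$, produce corresponding solutions $v_n\in C([0,T];Q^{2k'})$ for arbitrarily large $k'$ (iterating the above scheme in higher-order spaces, which is possible since $g_n\in\cap_{k'}Q^{2k'}=\cS(\rd)$), and invoke \eqref{apriori} in $Q^{2k}$ applied to the differences $v_n-v_m$ to see that $\{v_n\}$ is Cauchy in $C([0,T];Q^{2k})$. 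Its limit is the sought-for solution, and the $z$-uniformity of the constants in \eqref{apriori} immediately transfers to a $z$-uniform bound on the $Q^{2k}$-norms of the solution, which is what feeds back into the statement \eqref{pri} needed for Theorem \ref{mainteo}.
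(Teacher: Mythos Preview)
Your derivation of the a priori estimate \eqref{apriori} is exactly the paper's: differentiate $\|u(t)\|_{Q^{2k}}^2$, split off the two terms governed by Lemma \ref{lemmachiave} (noting that the seminorms of $a_z,b_z$ in $S^{(2)}_{0,0},S^{(1)}_{0,0}$ and the lower bound on $a_z$ coincide with those of $a,b$, so the constant is $z$-independent), estimate the forcing terms by Cauchy--Schwarz, and finish with Gronwall.

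The genuine difference is in how you pass from the a priori estimate to existence. The paper does not regularize at all: it observes that the \emph{adjoint} operator $-\partial_t+a_z^w(t,x,D)-ib_z^w(t,x,D)$ satisfies the very same hypotheses (same $a_z$, sign change on $b_z$ irrelevant), hence the same a priori estimate holds for the backward problem with final datum at $t=T$. Existence then follows from the abstract functional-analytic duality argument of H\"ormander \cite[proof of Theorem 23.1.2]{hormanderIII}, with the $H^s$ scale there replaced by the $Q^{2k}$ scale. This is short and avoids any approximation of the operator. Your route---elliptic regularization by $\varepsilon(-\Delta+|x|^2)$, Galerkin against the Hermite basis, weak compactness, and then a density argument on initial data to recover strong continuity in $C([0,T];Q^{2k})$---is correct and entirely self-contained, but considerably longer; its advantage is that it never invokes an external existence theorem and makes the role of the Shubin ellipticity of the regularizer explicit. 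Note that in the paper's duality approach the one-sidedness of the estimate (your ``main obstacle'') is not an issue, precisely because existence comes from the \emph{adjoint} inequality rather than from a two-sided energy identity.
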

\begin{remark}\rm
The point is the uniformity with respect to $z$ of the constant $C$ in the energy estimate \eqref{apriori}. For example, for the Schr\"odinger equation, \eqref{apriori} holds too, but not uniformly with respect to $z$ (and in fact the corresponding propagator is not a pseudodifferential operator with symbol in $S^0_{0,0}$). 
\end{remark}
\begin{proof}[Proof of Proposition \ref{proposizione}] The argument is classical and therefore we only sketch the proof. We observe, first of all, that for fixed $z$, $a_z^w(t,x,D)$ and $b_z^w(t,x,D)$ by the assumptions {\bf (i),\,(iii),\,(iv)} in the introduction are strongly continuous as operators in $\cS(\rd)$ and also belong to a bounded subset of the space of continuous operators $Q^{2k}\to Q^{2k-2}$, for every $k\in\bZ$ (cf.\ the argument in \cite[pag.\ 386]{hormanderIII} and \cite[Proposition 1.5.5]{nr}). Hence they are also strongly continuous as operators $Q^{2k}\to Q^{2k-2}$.  Let now \[
f=L_z u=\partial_tu+\mathcal{L}_z u=\partial_t u+a_z^w(t,x,D)u+ib_z^w(t,x,D)u.
\]
 We have  
\begin{align*}
\frac{d}{dt}\|u(t)\|^2_{Q^{2k}}&=\frac{d}{dt}\langle u(t),u(t) \rangle_{Q^{2k}}=2{\rm Re}\langle u(t),u'(t) \rangle_{Q^{2k}}\\
&=-2{\rm Re}\langle u(t),\mathcal{L}_zu(t)\rangle_{Q^{2k}}+2{\rm Re}\langle u(t),f(t)u(t)\rangle_{Q^{2k}}
\end{align*}
where we used the equation (i.e.\ the definition of $f$). With the notation in Lemma \ref{lemmachiave}, the first term in the right-hand side reads
\[
-2{\rm Re}\langle E_k(D)  u(t),E_k(D)  \mathcal{L}_z u(t)\rangle
-2{\rm Re}\langle E_k(x)u(t),E_k(x) \mathcal{L}_z u(t)\rangle
\]
and by Lemma \ref{lemmachiave} this expression is $
\leq C\|u\|^2_{Q^{2k}}$ for some constant $C>0$ independent of $z$. \par
Using the Cauchy-Schwarz' inequality in $Q^{2k}$ for the term involving $f$ and then Gronwall's lemma we easily get \eqref{apriori} (cf.\ e.g.\ \cite[Section 2.1.2]{rauch}). \par
The last part of the statement follows from the a priori estimate \eqref{apriori} by an abstract functional analytic argument; see e.g.\ the proof of \cite[Theorem 23.1.2]{hormanderIII}, where one has to replace the space $H^s$ which appear there with our spaces $Q^{2k}$, and observe that the adjoint operator $-\partial_t+a^w_z(t,x,D)-ib_z^w(t,x,D)$ satisfies the same a priori estimates as $L$, but for the backward problem with final condition at $t=T$. 
\end{proof}
\begin{remark}\rm In the statement of Theorem \ref{mainteo} one could equivalently say  that the symbol $p$ of the propagator is continuous as a function of $t$ valued in $C^\infty(\rdd)$ endowed with the usual Frech\'et topology, or even endowed with the pointwise convergence topology; the equivalence is a consequence of the boundedness of the family of symbols $\{p(t,\cdot): t\in[0,T]\}$ in $S^0_{0,0}$; see e.g.\ \cite[Proposition 1.1.2]{nr}.  \par
However, generally speaking, the map $[0,T]\ni t\mapsto p(t,\cdot)$, valued in $S^0_{0,0}$ or even in $L^\infty(\rdd)$, is {\it not} continuous. This is already evident for the classical heat equation ($a(t,x,\xi)=|\xi|^2$, $b\equiv0$).    
\end{remark}
\section{The nonlinear equation: proof of Theorem \ref{mainteo0}}
We use a standard contraction argument; cf.\ \cite[Theorem 6.1]{baoxiang}. We consider first the case $p<\infty$ and argue in the space $X:=C([0,T_0],M^{p,1}_s)$, with $T_0$ small enough. \par We write the semilinear equation in integral form (Duhamel principle) as
\[
u(t)=S(t,0) u_0+\int_0^t S(t,\sigma) \mathcal{N}(\sigma,x,u(\sigma))\,d\sigma
\]
where $S(t,\sigma)$, $0\leq \sigma\leq  t\leq T$, is the linear propagator corresponding to initial data at time $\sigma$.  
The classical iteration scheme works in $X$ if the following properties are verified:
\begin{itemize}
\item[\bf a)] $S(t,\sigma)$ is strongly continuous on $M^{p,1}_s$ for $0\leq \sigma\leq t\leq T$ (which also implies a uniform bound for the operator norm with respect to $\sigma,t$, by the uniform boundedness principle);
\item[\bf b)] The nonlinearity verifies $\|\mathcal{N}(t,x,u)-\mathcal{N}(t,x,v)\|_X\leq C\|u-v\|_X$, for $u,v\in X$ in every fixed ball.
\end{itemize}
Concerning {\bf a)} we observe that, if $u_0\in\cS(\rd)$ then $t\mapsto S(t,\sigma)u_0$ is continuous in $\cS(\rd)$ for fixed $\sigma$, as a consequence of Theorem \ref{mainteo}. \par On the other hand, if $\sigma'\leq \sigma\leq t$ and setting $\mathcal{L}(t)=a^w(t,x,D)+ib^w(t,x,D)$, by \eqref{apriori} applied at the initial time $\sigma$ we have, for every $k\in\bZ$,
\begin{align*}
\|S(t,\sigma)u_0-S(t,\sigma')u_0\|_{Q^{2k}}&\leq C_1\|u_0-S(\sigma,\sigma')u_0\|_{Q^{2k}}\\
&=C_1\|\int_{\sigma'}^\sigma \mathcal{L}(\tau)S(\tau,\sigma')u_0 \,d\tau\|_{Q^{2k}}\leq C_2(\sigma-\sigma')\|u_0\|_{Q^{2k+2}},
\end{align*}
where we used that $\mathcal{L}(t): Q^{2k+2}\to Q^{2k}$ and $S(t,\sigma'):Q^{2k+2}\to Q^{2k+2}$ continuously, with bounds uniform with respect to $t,\sigma'$. 
Hence, the map $\sigma\mapsto S(t,\sigma)u_0$ is continuous in $\cS(\rd)$ uniformly with respect to $t$. This gives the strong continuity of $S(t,\sigma)$ on $\cS(\rd)$, as a function of $\sigma,t$. \par
Finally, we know from Theorem \ref{mainteo} that the operators $S(t,\sigma)$ are pseudodifferential with symbols uniformly bounded in $S^0_{0,0}$. It follows from the continuity results of such operators in modulation spaces (\cite[Theorem 4.1]{g-ibero}) that they belong to a bounded subset of the space of continuous operators on $M^{p,1}_s$. Since $\cS(\rd)$ is dense in $M^{p,1}_s$ we deduce strong continuity on $M^{p,1}_s$.\par
Consider now the point ${\bf b)}$.  The desired estimate was already proved in \cite[Formula (28)]{bertinoro12} for the same nonlinearity as in \eqref{nonlinea}, except for the factor $g(t,x)\in M^{\infty,1}_s$. However one can easily take into account this additional factor, using the fact that $M^{p,1}_s$ is a modulus on $M^{\infty,1}_s$ (\cite[Corollary 4.2]{bertinoro57} and \cite{bertinoro3}). \par
If $p=\infty$ one considers $X:=C([0,T_0],\tilde{M}^{\infty,1}_s)$ and can repeat the same argument as above. This conclude the proof of Theorem \ref{mainteo0}.
\begin{remark}\label{contro1}\rm
We notice that the linear propagator $S(t,\sigma)$, as any pseudodifferential operator with symbol in $S^0_{0,0}$, is still bounded on $M^{\infty,1}$ (\cite[Theorem 4.1]{g-ibero}); however it is not strongly continuous, in general, and the wellposedness may fail already for the linear problem. Consider, for example, the symbols $a(t,x,\xi)=|x|^2$, $b\equiv0$. Then $S(t,0)u_0(x)=e^{-t|x|^2}u_0(x)$, and if $u_0\equiv 1\in M^{\infty,1}$ we have
\[
C\|S(t,0)u_0-u_0\|_{M^{\infty,1}}\geq \|S(t,0)u_0-u_0\|_{L^\infty}=\sup_{x\in\rd}|e^{-t|x|^2}-1|=1
\]
for every $t>0$ and some $C>0$. 
\end{remark}
\begin{remark}\label{contro2}\rm
The results in Theorem \ref{mainteo0} (and Theorem \ref{mainteo}) do not hold if we consider a symbol $b$ in \eqref{equazione} of order $2$. For example, for $b(t,x,\xi)=|x|^2$, $a\equiv0$, we get the linear propagator $S(t,0)u_0(x)=e^{-it|x|^2}u_0(x)$, and when $t>0$ this operator is bounded on $M^{p,q}$ only if $p=q$ (\cite[Proposition 7.1]{cnr1}). 
\end{remark}

\section{The action on the space of analytic functions} 
Consider the space $\mathcal{A}$ of smooth functions $f$ in $\rd$ such that 
\[
||\partial^\alpha f||\leq C^{|\alpha|+1}\alpha!,\quad \alpha\in\bN^d,
\]
for some constant $C>0$.\par
The study of the heat semigroup $e^{t\Delta}$ on this function space was detailed e.g. in \cite{bertinoro57} and \cite[Section 2.5]{baoxiang}, where $\mathcal{A}$ was regarded as the union of modulation spaces with an exponential weight in the frequency variable.  
We now show that for the operator $L$ in \eqref{equazione}, the linear propagator $S(t)=S(t,0)$ still preserves $\mathcal{A}$ provided the symbols $a(t,x,\xi)$ and $b(t,x,\xi)$ are analytic with respect to $x$. Namely, we replace the assumptions {\bf (i),(iii)} in the introduction by the following ones: \medskip
\begin{itemize}
\item[\bf (i)$^\prime$]  
$ |\partial^\alpha_\xi\partial^\beta_x a(t,x,\xi)|\leq C_\alpha C^{|\beta|+1}\beta!,\quad |\alpha|+|\beta|\geq 2,
$
\medskip
\item[\bf (iii)$^\prime$] 
$ |\partial^\alpha_\xi\partial^\beta_x b(t,x,\xi)|\leq C_\alpha C^{|\beta|+1}\beta!,\quad |\alpha|+|\beta|\geq 1,
$
\end{itemize}
for every $x,\xi\in\rd$, $t\in[0,T]$, and for constants $C_\alpha>0$ independent of $\beta$ and $C>0$ independent of $\alpha$ and $\beta$.\par
\begin{theorem}
Assume the hypotheses ${\bf (i)',(ii),(iii)',(iv)}$ for the operator $L$ in \eqref{equazione}. Then the corresponding linear propagator $S(t)$ maps $\mathcal A\to\mathcal A$. 
\end{theorem}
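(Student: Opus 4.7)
The plan is to propagate the analytic regularity by a time-dependent radius of analyticity argument, using as the main analytic tool the energy estimate from Lemma \ref{lemmachiave} applied to every derivative of the solution. I would first identify $\mathcal{A}$ with a scale of Banach spaces
\[
\mathcal{A}_\rho=\Big\{u\in L^2(\rd):\ \|u\|_\rho^2:=\sum_{\alpha\in\bN^d}\frac{\rho^{2|\alpha|}}{(\alpha!)^2}\|\partial^\alpha u\|^2<\infty\Big\},\qquad\rho>0,
\]
so that $\mathcal{A}=\bigcup_{\rho>0}\mathcal{A}_\rho$ (this is equivalent to the characterization as a union of weighted modulation spaces with an exponential weight in frequency, as in \cite{bertinoro57}). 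Given $u_0\in\mathcal A$, pick $\rho_0>0$ with $\|u_0\|_{\rho_0}<\infty$ and introduce a decreasing radius $\rho(t)=\rho_0-Kt$, with $K>0$ to be chosen large; the goal becomes an estimate of the form
\[
\frac{d}{dt}\|u(t)\|^2_{\rho(t)}\leq C\|u(t)\|^2_{\rho(t)},\qquad t\in[0,\rho_0/(2K)],
\]
from which Gronwall yields $u(t)\in\mathcal{A}_{\rho(t)}\subset\mathcal A$ on a small interval, and iteration covers $[0,T]$.

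To derive the differential inequality I would differentiate the PDE and apply $\partial^\alpha$, obtaining
\[
\partial_t(\partial^\alpha u)+\mathcal{L}(\partial^\alpha u)=-\sum_{\beta<\alpha}\binom{\alpha}{\beta}\bigl[(\partial^{\alpha-\beta}_x a)^w+i(\partial^{\alpha-\beta}_x b)^w\bigr]\partial^\beta u.
\]
For each fixed $\alpha$, the diagonal term $-2\mathrm{Re}\langle \mathcal L\partial^\alpha u,\partial^\alpha u\rangle$ is bounded from above by $2C\|\partial^\alpha u\|^2$ via Lemma \ref{lemmachiave}, with a constant $C$ \emph{independent of $\alpha$}, which is essential. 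The commutator terms split into two regimes. For $|\alpha-\beta|\geq 2$, the symbol $\partial^{\alpha-\beta}_x(a+ib)$ is bounded by $\mathbf{(i)}',\mathbf{(iii)}'$, and all its $(x,\xi)$-derivatives of any fixed order satisfy analogous factorial bounds; a quantitative version of Calder\'on--Vaillancourt (tracking the dependence on the seminorms) gives $L^2\to L^2$ operator norm $\leq C^{|\alpha-\beta|+1}(\alpha-\beta)!$. The terms with $|\alpha-\beta|=1$ are first order in $\xi$ and couple $\|\partial^\alpha u\|$ to derivatives of order $|\alpha|$ and $|\alpha|-1$; these are exactly what the shrinking of $\rho(t)$ is designed to absorb, via the Cauchy-type inequality $\rho^{|\alpha|}\|\partial^{\alpha+e_j}u\|/\alpha!\lesssim (\rho')^{|\alpha|+1}\|\partial^{\alpha+e_j}u\|/(\alpha+e_j)!\cdot|\alpha|/(\rho'-\rho)$ built into the definition of $\|\cdot\|_\rho$.

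Summing in $\alpha$ against the weights $\rho(t)^{2|\alpha|}/(\alpha!)^2$ and using the combinatorial identity $\binom{\alpha}{\beta}(\alpha-\beta)!/\alpha!=1/\beta!$, the commutator contribution gets reorganized as a convolution-type sum which, thanks to the geometric decay $C^{|\alpha-\beta|}$ and the counting bound $\#\{\beta\leq\alpha:|\alpha-\beta|=k\}\leq(k+1)^{d-1}$, converges and is controlled by $\|u\|_{\rho(t)}^2$ plus a term involving one extra derivative. The extra derivative is absorbed by choosing $K$ sufficiently large compared to the constants arising from $\mathbf{(i)}'$, $\mathbf{(iii)}'$ and from Lemma \ref{lemmachiave}, so that the gain $-2K\rho(t)|\alpha|$ in $\tfrac{d}{dt}\rho(t)^{2|\alpha|}$ dominates.

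The main obstacle is precisely this combinatorial bookkeeping: the second-order nature of $a^w$ means the commutators at level $|\alpha-\beta|=1$ introduce a derivative loss of order one that must be reabsorbed, while the higher-order commutators, although bounded on $L^2$, contain factorially growing seminorms that must be matched term-by-term against the $1/(\alpha!)^2$ weights. The analyticity hypotheses $\mathbf{(i)}'$, $\mathbf{(iii)}'$ are exactly calibrated so that these two effects balance. I would first derive the a priori estimate \emph{a posteriori} on the Schwartz solution from Theorem \ref{mainteo}, and then argue by a density/regularization procedure (truncating $u_0$ in frequency to lie in $\cS(\rd)\cap\mathcal A_{\rho_0}$, passing to the limit using the uniformity of the estimate) to conclude that $S(t)u_0\in\mathcal A_{\rho(t)}$ for arbitrary $u_0\in\mathcal A$.
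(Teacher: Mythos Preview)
Your scheme has a genuine gap in the handling of the commutator terms with $|\alpha-\beta|=1$. You assert that the symbol $\partial^{\alpha-\beta}_x a$ is ``first order in $\xi$'' and therefore couples $\|\partial^\alpha u\|$ only to $x$-derivatives of $u$ of order $|\alpha|$ and $|\alpha|-1$. This is not justified: under $(\mathbf{i})'$ one only knows $\partial^\gamma_x a\in S^{(1)}_{0,0}$ for $|\gamma|=1$, which allows linear growth in $x$ as well as in $\xi$. For instance $a(x,\xi)=|\xi|^2+|x|^2$ satisfies $(\mathbf{i})'$ and $(\mathbf{ii})$, yet $\partial_{x_1}a=2x_1$ is a pure multiplication operator with no $\xi$-dependence at all. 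The term $\langle 2x_1\,\partial^{\alpha-e_1}u,\partial^\alpha u\rangle$ is not controlled by any $\mathcal A_\rho$-type norm, because those norms weight only derivatives; your shrinking radius $\rho(t)=\rho_0-Kt$ can absorb one extra $\partial_x$ but has no leverage against multiplication by $x_j$. So the argument, as written, does not close.

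What actually saves the $|\gamma|=1$ terms is the real-part structure already present in the energy identity, and this is precisely the mechanism the paper exploits. One moves the derivative $\partial^\gamma$ from $\partial^\alpha u=\partial^\gamma\partial^{\alpha-\gamma}u$ back onto the operator and computes the self-adjoint part of $\partial^\gamma(\partial^\gamma_x a)^w$: up to $S^0_{0,0}$ remainders it equals $(\partial^{2\gamma}_x a)^w$, which is $L^2$-bounded by $(\mathbf{i})'$. (In the example above, $2\,\mathrm{Re}\,\langle 2x_1\partial^{\alpha-e_1}u,\partial^\alpha u\rangle=-2\|\partial^{\alpha-e_1}u\|^2$.) Hence the $|\gamma|=1$ contribution produces \emph{no} loss whatsoever, neither a derivative nor an $x$-weight, and a \emph{fixed} small parameter $\epsilon$ suffices. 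The paper therefore works with the truncated norms $\mathcal E^\epsilon_N[u,u]=\sum_{|\alpha|\le N}\epsilon^{2|\alpha|}(\alpha!)^{-2}\|\partial^\alpha u\|^2$ and proves the lower bound $\mathrm{Re}\,\mathcal E^\epsilon_N[\mathcal L u,u]\ge -C\,\mathcal E^\epsilon_N[u,u]$ with $C$ independent of $N$; Gronwall then gives the a priori estimate directly on $[0,T]$, with no time-dependent radius and no iteration. If you incorporate this real-part cancellation, the shrinking of $\rho$ becomes superfluous and your argument reduces to the paper's.
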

\begin{proof}
For $\epsilon>0$ and $N\in\bN$, define the following inner product in $H^N(\rd)$:
\[
\mathcal{E}^\epsilon_N[u,v]=\sum_{|\alpha|\leq N} \frac{\epsilon^{2|\alpha|}}{\alpha!^2}\langle \partial^\alpha u,\partial^\alpha v\rangle.
\]
Clearly, it suffices to prove that for $\epsilon>0$ small enough the following a priori estimate holds with a constant $C>0$ independent of $N$:
\[
\mathcal{E}^\epsilon_N[u(t),u(t)]^{1/2}\leq C \mathcal{E}^\epsilon_N[u(0),u(0)]^{1/2}+C\int_0^t \mathcal{E}^\epsilon_N[Lu(s),Lu(s)]^{1/2}\,ds \quad\forall t\in[0,T].
\]
This can be obtained as in the proof of Proposition \ref{proposizione} once we have the following lower bound for $\mathcal{L}:=a^w(t,x,D)+ib^w(t,x,D)$:  
\[
{\rm Re}\,\mathcal{E}^\epsilon_N[\mathcal{L}u(t),u(t)]\geq -C \mathcal{E}^\epsilon_N[u(t),u(t)]
\]
for some constant $C>0$ independent of $N$. This will be proved in the following lemma. 
\end{proof}

\begin{lemma}\label{lemma2} Let $a(x,\xi)$ and $b(x,\xi)$ be real-valued symbols satisfying the estimates
\begin{equation}\label{a0}
 |\partial^\alpha_\xi\partial^\beta_x a(x,\xi)|\leq C_\alpha C^{|\beta|+1}\beta!,\quad |\alpha|+|\beta|\geq 2
\end{equation}
\[ |\partial^\alpha_\xi\partial^\beta_x b(x,\xi)|\leq C_\alpha C^{|\beta|+1}\beta!,\quad |\alpha|+|\beta|\geq 1.
\]
 Assume, moreover, that $a(x,\xi)\geq -C$ for every $x,\xi\in\rd$ and some constant $C>0$.\par
Let $\mathcal{L}=a^w(x,D)+ib^w(x,D)$. There exist $\epsilon>0$, $C>0$ such that for every $N\in\bN$ and $u\in\cS(\rd)$,
\[
{\rm Re}\,\mathcal{E}^\epsilon_N[\mathcal{L}u,u]\geq -C\mathcal{E}^\epsilon_N[u,u].
\]

\end{lemma}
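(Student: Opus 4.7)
\noindent\emph{Overall approach.} I would treat Lemma~\ref{lemma2} as an analytic-scale upgrade of Lemma~\ref{lemmachiave}, proved by differentiating the symbol composition and taming the commutator growth via the analytic weights $\epsilon^{|\alpha|}/\alpha!$. The starting point is the iterated commutator identity $\partial^\alpha(c^w u)=\sum_{\gamma\leq\alpha}\binom{\alpha}{\gamma}(\partial^\gamma_x c)^w\partial^{\alpha-\gamma}u$, applied to both $c=a$ and $c=b$, which gives
\[
\partial^\alpha(\mathcal{L}u)=\mathcal{L}\partial^\alpha u+\sum_{0<\gamma\leq\alpha}\binom{\alpha}{\gamma}\bigl[(\partial^\gamma_x a)^w+i(\partial^\gamma_x b)^w\bigr]\partial^{\alpha-\gamma}u.
\]
Substituting into $\mathcal{E}^\epsilon_N[\mathcal{L}u,u]$, taking real parts, and separating the $\gamma=0$ \emph{diagonal} from the commutator remainders reduces the problem to showing each remainder is $O(\epsilon)\,\mathcal{E}^\epsilon_N[u,u]$ with constants independent of $N$.

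\smallskip

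\noindent\emph{Diagonal and routine commutators.} The diagonal is $\sum_{|\alpha|\leq N}\frac{\epsilon^{2|\alpha|}}{(\alpha!)^2}\,\mathrm{Re}\langle\mathcal{L}\partial^\alpha u,\partial^\alpha u\rangle$, and Lemma~\ref{lemmachiave} with $k=0$ applied to each $\partial^\alpha u$ yields the uniform lower bound $-C\,\mathcal{E}^\epsilon_N[u,u]$, the relevant seminorms of $a,b$ being controlled by {\bf (i)}$^\prime$, {\bf (iii)}$^\prime$. For the commutator indices with $|\gamma|\geq 2$ (for $a$) or $|\gamma|\geq 1$ (for $b$), each $\partial^\gamma_x a$, $\partial^\gamma_x b$ lies in $S^0_{0,0}$; using $(\delta+\gamma)!\leq 2^{|\delta|+|\gamma|}\delta!\gamma!$ together with the analytic bounds, Calder\'on--Vaillancourt gives $\|(\partial^\gamma_x a)^w\|_{L^2\to L^2}\leq K(2C)^{|\gamma|}\gamma!$ and likewise for $b$. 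Setting $v_\mu:=\frac{\epsilon^{|\mu|}}{\mu!}\|\partial^\mu u\|$, the algebraic identity $\frac{\epsilon^{2|\alpha|}}{(\alpha!)^2}\binom{\alpha}{\gamma}\gamma!=\frac{\epsilon^{2|\alpha|}}{\alpha!(\alpha-\gamma)!}$ shows each entry is $\lesssim(2C\epsilon)^{|\gamma|}v_{\alpha-\gamma}v_\alpha$; AM--GM together with the geometric sum $\sum_{|\gamma|\geq 1}(2C\epsilon)^{|\gamma|}$ (convergent for $\epsilon$ small) bound these blocks by $O(\epsilon)\,\mathcal{E}^\epsilon_N[u,u]$.

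\smallskip

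\noindent\emph{The critical block $|\gamma|=1$ for $a$---the main obstacle.} Because {\bf (i)}$^\prime$ only controls derivatives of $a$ of total order $\geq 2$, the single first derivative $\partial_{x_j}a$ need not be bounded, so Calder\'on--Vaillancourt is unavailable. The remedy is integration by parts: writing $v:=\partial^{\alpha-e_j}u$ and using the self-adjointness of $(\partial_{x_j}a)^w$ (reality of the symbol), a short computation yields
\[
2\,\mathrm{Re}\langle(\partial_{x_j}a)^w v,\partial_{x_j}v\rangle=-\langle(\partial^2_{x_j}a)^w v,v\rangle,
\]
and now $\partial^2_{x_j}a$ \emph{is} bounded by {\bf (i)}$^\prime$, say $\|(\partial^2_{x_j}a)^w\|_{L^2\to L^2}\leq M$ uniformly in $j$. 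Re-indexing $\beta=\alpha-e_j$ and using $\alpha_j/(\alpha!)^2=1/[(\beta_j+1)(\beta!)^2]$, this block contributes
\[
-\frac{\epsilon^2}{2}\sum_{|\beta|\leq N-1}\frac{\epsilon^{2|\beta|}}{(\beta!)^2}\sum_{j=1}^d\frac{1}{\beta_j+1}\langle(\partial^2_{x_j}a)^w\partial^\beta u,\partial^\beta u\rangle,
\]
whose modulus is $\leq\tfrac{1}{2}dM\epsilon^2\,\mathcal{E}^\epsilon_N[u,u]$.

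\smallskip

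\noindent\emph{Conclusion.} Collecting all pieces,
\[
\mathrm{Re}\,\mathcal{E}^\epsilon_N[\mathcal{L}u,u]\geq -\bigl(C+O(\epsilon)\bigr)\mathcal{E}^\epsilon_N[u,u],
\]
with constants depending only on the analytic parameters, the lower-bound constant $C$, and the dimension $d$---crucially independent of $N$ and of $u\in\cS(\rd)$. Fixing $\epsilon$ small enough closes the estimate. The essential subtlety is the step with $|\gamma|=1$ on $a$: the gap between what {\bf (i)}$^\prime$ controls and what the naive commutator expansion requires must be bridged by the integration by parts exploiting reality of the symbol, in exactly the same spirit as the reduction that made the sharp-G\aa{}rding inequality work in Lemma~\ref{lemmachiave}.
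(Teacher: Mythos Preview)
Your argument is correct and follows the same overall architecture as the paper's proof: expand $\partial^\alpha(\mathcal{L}u)$ via the Leibniz/commutator formula, control the diagonal $\gamma=0$ by G\aa rding (the paper invokes sharp G\aa rding directly, you via Lemma~\ref{lemmachiave} with $k=0$), bound the $|\gamma|\geq 2$ (for $a$) and $|\gamma|\geq 1$ (for $b$) commutators by Calder\'on--Vaillancourt together with the geometric series $\sum(C\epsilon)^{|\gamma|}$, and treat the critical $|\gamma|=1$ block for $a$ by an integration by parts that trades the uncontrolled $\partial_{x_j}a$ for the bounded $\partial_{x_j}^2 a$. The one substantive difference is that you stay in the Weyl calculus throughout, exploiting the exact identities $[\partial_{x_j},c^w]=(\partial_{x_j}c)^w$ and $\big((\partial_{x_j}a)^w\big)^\ast=(\partial_{x_j}a)^w$ for real $a$; this makes the $|\gamma|=1$ step a one-line computation $2\,\mathrm{Re}\langle(\partial_{x_j}a)^w v,\partial_{x_j}v\rangle=-\langle(\partial_{x_j}^2 a)^w v,v\rangle$. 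The paper instead passes to the left quantization $a^w(x,D)=a(x,D)+r(x,D)$, which forces it to carry the remainder $r$ through a separate estimate \eqref{a5} and to compute the self-adjoint part of $\partial^\beta(\partial^\beta_x a)(x,D)$ via a further adjoint remainder $r'$. Your route is therefore a little more economical; the paper's has only the minor advantage that the left-quantization Leibniz rule may be more familiar.
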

\begin{proof}
 It suffices to prove the estimates
\begin{equation}\label{lba}
{\rm Re}\,\mathcal{E}^\epsilon_N[a^w(x,D)u,u]\geq -C\mathcal{E}^\epsilon_N[u,u],\quad |{\rm Re}\, \mathcal{E}^\epsilon_N[i b^w(x,D)u,u]|\leq C\mathcal{E}^\epsilon_N[u,u].
\end{equation}
Let us prove the first estimate in \eqref{lba}. We have
\[
\langle\partial^\alpha a^w(x,D) u,\partial^\alpha u\rangle=\langle a^w(x,D) \partial^\alpha u,\partial^\alpha u\rangle+ \langle [\partial^\alpha,a^w(x,D)] u,\partial^\alpha u\rangle.
\]
The first term is estimated from below by the sharp G\a r arding inequality, and we are left to prove that 
\begin{equation}\label{a1}
\sum_{|\alpha|\leq N}\frac{\epsilon^{2|\alpha|}}{\alpha!^2}|{\rm Re}  \langle [\partial^\alpha,a^w(x,D)] u,\partial^\alpha u\rangle|\leq C\mathcal{E}^\epsilon_N[u,u].
\end{equation}
It is slightly easier to work with the usual (left) quantization; by the standard pseudodifferential calculus we have $a^w(x,D)=a(x,D)+r(x,D)$, where $r(x,\xi)$ satisfies the estimates 
\begin{equation}\label{a2}
|\partial^\alpha_\xi \partial^\beta_x r(x,\xi)|\leq C_\alpha C^{|\beta|+1}\beta!,\quad \alpha,\beta\in\bN^d.
\end{equation}
Now we have, by an explicit computation,
\[
[\partial^\alpha,a^w(x,D)]=\sum_{0\not=\beta\leq\alpha}\binom{\alpha}{\beta} \big( (\partial^\beta_x a)(x,D)+ (\partial^\beta_x r)(x,D)\big)\partial^{\alpha-\beta}
\]
and therefore
\begin{align*}
 \langle [\partial^\alpha,a^w(x,D)] u,\partial^\alpha u\rangle=&
\sum_{\beta\leq\alpha,\ |\beta|\geq 2} \binom{\alpha}{\beta}
 \langle(\partial^\beta_x a)(x,D)\partial^{\alpha-\beta} u,\partial^{\alpha} u\rangle \\
 &+\sum_{\beta\leq\alpha,\ |\beta|=1} \binom{\alpha}{\beta}
 \langle(\partial^\beta_x a)(x,D)\partial^{\alpha-\beta} u,\partial^{\alpha} u\rangle \\
&+ \sum_{0\not=\beta\leq\alpha} \binom{\alpha}{\beta}
 \langle(\partial^\beta_x r)(x,D)\partial^{\alpha-\beta} u,\partial^{\alpha} u\rangle.
 \end{align*}
Hence, \eqref{a1} will follow from the following three estimates:
\begin{equation}\label{a3}
\sum_{|\alpha|\leq N} \sum_{\beta\leq\alpha,|\beta|\geq 2} \frac{\epsilon^{2|\alpha|}}{\alpha!^2} \binom{\alpha}{\beta}
| \langle(\partial^\beta_x a)(x,D)\partial^{\alpha-\beta} u,\partial^{\alpha} u\rangle|\leq C\mathcal{E}^\epsilon_N[u,u],
\end{equation}
\begin{equation}\label{a4}
\sum_{|\alpha|\leq N} \sum_{\beta\leq\alpha,|\beta|= 1} \frac{\epsilon^{2|\alpha|}}{\alpha!^2} \binom{\alpha}{\beta}
|{\rm Re} \langle(\partial^\beta_x a)(x,D)\partial^{\alpha-\beta} u,\partial^{\alpha} u\rangle|\leq C\mathcal{E}^\epsilon_N[u,u],
\end{equation}
\begin{equation}\label{a5}
\sum_{|\alpha|\leq N} \sum_{0\not=\beta\leq\alpha} \frac{\epsilon^{2|\alpha|}}{\alpha!^2} \binom{\alpha}{\beta}
| \langle(\partial^\beta_x r)(x,D)\partial^{\alpha-\beta} u,\partial^{\alpha} u\rangle|\leq C\mathcal{E}^\epsilon_N[u,u].
\end{equation}
Let us prove \eqref{a3}. We observe that the operator $(\partial^\beta_x a)(x,D)$ is bounded on $L^2$ and its operator norm is estimated by a seminorm of its symbol in $S^0_{0,0}$ depending only on the dimension $d$; hence by \eqref{a0} we have, for some constant $C>0$, 
\begin{equation}\label{a6}
\|(\partial^\beta_x a)(x,D) u\|\leq C^{|\beta|+1}\beta!\|u\|,\quad |\beta|\geq 2.
\end{equation}
By the Cauchy-Schwarz' inequality in $L^2$ and \eqref{a6}, the left-hand side of \eqref{a3} is estimated by
\[
C\sum_{|\alpha|\leq N}\sum_{\beta\leq\alpha, |\beta|\geq 2} (C\epsilon)^{|\beta|}\frac{\epsilon^{|\alpha-\beta|}}{(\alpha-\beta)!}\|\partial^{\alpha-\beta} u\|\cdot \frac{\epsilon^{|\alpha|}}{\alpha!}\|\partial^{\alpha} u\|. 
\]
By applying the Cauchy-Schwarz' inequality and then the Minkowski's inequality for sequences we continue the estimate as 
\[
\leq C\sum_{2\leq |\beta|\leq N} (C\epsilon)^{|\beta|}\underbrace{\Big(\sum_{|\alpha|\leq N,\alpha\geq\beta} \frac{\epsilon^{2|\alpha-\beta|}}{(\alpha-\beta)!^2}\|\partial^{\alpha-\beta} u\|^2\Big)^{1/2}}_{\leq \mathcal{E}^\epsilon_N[u,u]^{1/2}}\mathcal{E}^\epsilon_N[u,u]^{1/2}.
\]
If $\epsilon<C^{-1}$, we have $\sum_{|\beta|\geq 2} (C\epsilon)^{|\beta|}<\infty$ and we get \eqref{a3}. \par
Consider now \eqref{a4}. We can write the left-hand side as
\[
\sum_{|\alpha|\leq N} \sum_{\beta\leq\alpha,|\beta|= 1} \frac{\epsilon^{2|\alpha|}}{\alpha!^2} \binom{\alpha}{\beta}
|{\rm Re} \langle \partial^\beta(\partial^\beta_x a)(x,D)\partial^{\alpha-\beta} u,\partial^{\alpha-\beta} u\rangle|.
\] 
Now, by the pseudodifferential calculus we have 
$$(\partial^\beta_x a)(x,D)^\ast= (\partial^\beta_x a)(x,D)+r'(x,D)$$
 with $r'$ satisfying the same estimates \eqref{a2}. By a direct computation we see that the self-adjoint part of the operator $\partial^\beta(\partial^\beta_x a)(x,D)$ is given by 
\begin{align*}
{\rm Re}\, \partial^\beta(\partial^\beta_x a)(x,D)&=\partial^\beta(\partial^\beta_x a)(x,D)-(\partial^\beta_x a)(x,D)^\ast \partial^\beta\\
&=  (\partial^{2\beta} a)(x,D)-\partial^\beta_x r'(x,D)+(\partial^\beta_x r')(x,D).
\end{align*}
Hence we can write the left-hand side of \eqref{a4} as 
\begin{align*}
&\ \sum_{|\alpha|\leq N} \sum_{\beta\leq\alpha,|\beta|= 1} \frac{\epsilon^{2|\alpha|}}{\alpha!^2} \binom{\alpha}{\beta}
|{\rm Re} \langle (\partial^{2\beta}_x a)(x,D)\partial^{\alpha-\beta} u,\partial^{\alpha-\beta} u\rangle|\\
&+\sum_{|\alpha|\leq N} \sum_{\beta\leq\alpha,|\beta|= 1} \frac{\epsilon^{2|\alpha|}}{\alpha!^2} \binom{\alpha}{\beta}
|{\rm Re} \langle r'(x,D)\partial^{\alpha-\beta} u,\partial^{\alpha} u\rangle|\\
&+\sum_{|\alpha|\leq N} \sum_{\beta\leq\alpha,|\beta|= 1} \frac{\epsilon^{2|\alpha|}}{\alpha!^2} \binom{\alpha}{\beta}
|{\rm Re} \langle (\partial^{\beta}_x r')(x,D)\partial^{\alpha-\beta} u,\partial^{\alpha-\beta} u\rangle|.
\end{align*}
These expressions can then be estimated as in the proof of \eqref{a3}. \par 
Similarly, we can verify \eqref{a5} exactly as we did for \eqref{a3} (using \eqref{a2}).\par
Finally, the second inequality in \eqref{lba} can be obtained by similiar (and easier) arguments. 
\end{proof}

\section{Global pseudolocality}\label{applicazioni}
Let us recall the definition of global wave front set from \cite{hormander}. Consider the symbol class $\Gamma^m$, $m\in\R$, of  smooth functions $p(x,\xi)$ in $\rdd$ satisfying the estimates
\[
|\partial^\alpha_\xi\partial^\beta_x p(x,\xi)|\leq C_{\alpha\beta}(1+|x|+|\xi|)^{m-|\alpha|-|\beta|},\quad \forall \alpha,\beta\in\bN^d,\ x,\xi\in\rd.
\]
A point $(x_0,\xi_0)$ is called non-characteristic for $p\in \Gamma^m$ if there are $\epsilon,C>0$ such that 
\[
|p(x,\xi)|\geq C(1+|x|+|\xi|)^m \quad{\rm for}\ (x,\xi)\in V_{(x_0,\xi_0),\epsilon}
\]
where, for $z_0\in\rdd$, $z_0\not=0$, $V_{z_0,\epsilon}$ is the conic neighborhood
\[
V_{z_0,\epsilon}=\Big\{z\in\rdd\setminus\{0\}:\,\Big| \frac{z}{|z|}-\frac{z_0}{|z_0|} \Big|<\epsilon,\ |z|>\epsilon^{-1}\Big\} .
\]
 Let now $ f\in\cS'(\rd)$. We define its global wave front set $WF_G(f)\subset\rdd\setminus\{0\}$ by saying that $(x_0,\xi_0)\in\rd\times\rd$, $(x_0,\xi_0)\not=0$, does not belong to $WF_G(f)$ if there exists $\psi\in \Gamma^0$ which is non-characteristic at $(x_0,\xi_0)$, such that $\psi(x,D)f\in\cS(\rd)$. The set $WF_G(f)$ is a closed conic subset of $\rdd\setminus\{0\}$. As observed in the introduction and proved in \cite{hormander},  if $f\in\cS'(\rd)$ then $f\in\cS(\rd)$ if and only if $WF_G(f)=\emptyset$. \par
Now, we know from \cite[Theorem 4.1]{rw} that pseudodifferential operators with symbols in $S^0_{0,0}$ decreases such a global wave front set. Hence from Theorem \ref{mainteo} we get at once the following result for the linear propagator $S(t)$. 
\begin{corollary}
With the above notation, $WF_G(S(t)f)\subset WF_G(f)$ for every $f\in\cS'(\rd)$, $t\in[0,T]$. 
\end{corollary}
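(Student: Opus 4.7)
The proof is essentially immediate once the two preceding ingredients are in place, so my plan is to simply assemble them. The first step is to invoke Theorem \ref{mainteo}: for each fixed $t \in [0,T]$, the propagator $S(t)$ is a Weyl pseudodifferential operator whose symbol $p(t,\cdot)$ belongs to $S^0_{0,0}$ (with seminorms uniformly bounded in $t$, though uniformity is not needed for this particular statement). The second step is to quote \cite[Theorem 4.1]{rw}, which asserts precisely that any pseudodifferential operator with symbol in $S^0_{0,0}$ is globally pseudolocal in the sense that $WF_G(Af) \subset WF_G(f)$ for every $f \in \cS'(\rd)$. Combining these two facts with $A = S(t)$ yields the claim.

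The only point that one might want to check carefully is the quantization convention: the result in \cite{rw} is phrased for a particular quantization (typically the Kohn--Nirenberg/left quantization), while Theorem \ref{mainteo} delivers a Weyl symbol. This is not an obstacle, however, since the class $S^0_{0,0}$ is invariant under the passage between Weyl and left quantization — the change-of-quantization formula is given by a formal symbol expansion whose terms lie in $S^0_{0,0}$, and the asymptotic series is well-defined (modulo smoothing operators with Schwartz kernel) in the $S^0_{0,0}$ framework that already underlies the proof of Theorem \ref{mainteo}. So the Weyl symbol of $S(t)$ induces a left symbol in $S^0_{0,0}$ representing the same operator, and \cite[Theorem 4.1]{rw} applies directly.

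Since there is no real obstacle, the proof proposal is just a one-line assembly: cite Theorem \ref{mainteo}, cite \cite[Theorem 4.1]{rw}, conclude. If desired, one could add the remark that taking $f \in \cS(\rd)$ (so $WF_G(f) = \emptyset$) recovers as a special case the preservation of $\cS(\rd)$ by $S(t)$ — already known from Theorem \ref{mainteo} — which serves as a small consistency check.
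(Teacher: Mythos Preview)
Your proposal is correct and matches the paper's own argument exactly: the paper simply cites Theorem \ref{mainteo} together with \cite[Theorem 4.1]{rw} and states the corollary as an immediate consequence. Your extra remark about the Weyl versus left quantization is a fair point of care, but the paper does not even pause on it (and indeed the class $S^0_{0,0}$ is quantization-invariant), so your write-up is, if anything, slightly more detailed than the original.
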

The statement makes sense because $S(t)$, as a $S^0_{0,0}$-type pseudodifferential operator, extends to a bounded operator $\cS'(\rd)\to\cS'(\rd)$.\par

\section*{Acknowledgements}
It is a pleasure to express my gratitude to Elena Cordero, Luigi Rodino and Emanuela Sasso for interesting discussions on the subject of this paper.

\end{document}